\def\b{\begin{eqnarray}}
\def\e{\end{eqnarray}}
\newtheorem{theorem}{Theorem}
\newtheorem{corollary}{Corollary}[theorem]
\newtheorem{definition}{Definition}
\begin{document}

\begin{center}
{\huge \textbf{On Newton's Rule of Signs}}

\vspace {10mm}
\noindent
{\Large \bf Emil M. Prodanov}
\vskip.8cm
{\it School of Mathematical Sciences, Technological University Dublin,} \\
{\it Park House, Grangegorman, 191 North Circular Road, } \\
{\it Dublin D07 EWV4, Ireland} \\
{\it E-Mail: emil.prodanov@tudublin.ie}
\vskip1cm

\end{center}

\vskip2cm

\noindent
\begin{abstract}
\noindent
Analysing the {\it cubic sectors} of a real polynomial of degree $n$, a modification of Newton's Rule of signs is proposed with which stricter upper bound on the number of real roots can be found. A new necessary condition for reality of the roots of a polynomial is also proposed. Relationship between the quadratic elements of the polynomial is established through its roots and those of its derivatives. Some aspects of polynomial discriminants are also discussed --- the relationship between the discriminants of real polynomials, the discriminants of their derivatives, and the quadratic elements, following a ``discriminant of the discriminant" approach.
\end{abstract}

\vskip2cm
\noindent
{\bf Mathematics Subject Classification Codes (2020)}: 12D10, 26C10, 26D05
\vskip1cm
\noindent
{\bf Keywords}: Newton's Incomplete and Complete Rules, Quadratic and Simple Elements, Discriminants.

\newpage

\section{Introduction}

Newton's Rule, introduced without proof by Newton \cite{newton} and proven 182 years later by Sylvester \cite{syl}, allows the determination of upper bounds on the number of positive roots and the number of negative roots of a real polynomial of degree $n$ which, without loss of generality, can be taken to be in binomial form: $p(x) = \sum_{k=n}^0 {n \choose k} a_k x^k$. These bounds are found by considering the double sequence of the simple elements (the coefficients of the polynomial) and the quadratic elements of the polynomial --- the latter obtained by taking the square of each of the coefficients of the polynomial and subtracting from it the product of its immediate neighbours. The quadratic elements are the discriminants (modulo positive multiplicative factors) of all possible quadratic polynomials that can be extracted from the given polynomial by taking, also in binomial form, three consecutive coefficients from it. Permanencies in sign in the sequence of quadratic elements are associated with real roots of the polynomial: if there is a permanence in sign in the corresponding simple elements too, then this indicates a possible negative root, while, if there is a variation in the sign in the corresponding simple elements, then there may be a positive root. This is a generalisation of Descartes' rule of signs \cite{descartes} from 1637 (the Descartes rule was made more precise by Gauss \cite{gauss} in 1876).

Newton states \cite{newton} that {\it if all roots of a polynomial are real, then all quadratic elements of the polynomial are positive}, following the basic idea that if all the roots of a given polynomial are real, then the same is true of the reciprocal polynomial and of every derivative of both of them. Expressing the quadratic elements as inequalities for their positivity (the so called {\it Newton inequalities}), allows the reformulation of the necessary condition for reality of all roots of the polynomial: {\it if the roots of the polynomial are all real, then all Newton inequalities hold}.

Positive quadratic elements however are not always associated with real roots. For example, for the quartic polynomial $x^4 - 2x^3 - 2x^2 + 5x + 10$, all quadratic elements are positive (hence their sequence exhibits only permanencies of sign). But this quartic has two pairs of complex conjugate roots.

Performing the analysis at the level of cubic polynomials (instead of quadratic), that is, considering the {\it cubic sectors of a polynomial}, allows one to get a clearer picture of what type of roots are associated with the two quadratic elements of each of the cubic polynomials. As with the {\it quadratic sectors} of the polynomial, these cubic polynomials are all possible cubics that can be obtained by taking in binomial form four consecutive coefficients of the original polynomial. Negativity of either of the two quadratic elements of a cubic polynomial warrants a pair of complex roots. However, the case of positive quadratic elements needs further investigation. Each quadratic element (except the two at either end of the polynomial, defined as $a_n^2$ and $a_0^2$) has two {\it adjacent coefficients} in the polynomial (simple elements of the polynomial, each belonging to a different cubic sector) --- these are the two coefficients sitting on either side of the three which form the quadratic element. Using a new criterion on the coefficients of the cubic polynomial for the reality of its roots, instead of considering the sign of its discriminant, two types of positive quadratic elements are identified in this work. Real roots of a cubic are associated with a positive quadratic element for which the adjacent coefficient in the cubic lies in a particular finite interval, the endpoints of which are determined by the coefficients of the cubic, forming the quadratic element. However, a cubic with a positive quadratic element with adjacent coefficient not in its relevant interval has a pair of complex conjugate roots. If a positive quadratic element happens to have either of its adjacent coefficients outside of their prescribed relevant intervals (hence, associating with complex roots), such quadratic element, referred to in this work as {\it falsely positive quadratic element}, should, depending on its neighbours in the sequence of quadratic elements, have its sign changed to negative. The Newton Rule thus gets modified and, in result, it will exhibit a maximum number of real roots reduced by an even number (with the minimum number of complex roots increased, of course, by at least a pair).

The analysis of the {\it cubic sectors} of a real polynomial also allows the formulation of a new necessary condition for the reality of its roots: {\it if the roots of a real polynomial are all real numbers, then each quadratic element of the polynomial is positive and each adjacent coefficient lies in its prescribed relevant interval}. An equivalent form of this new necessary condition is the following: {\it if all roots are real, then the polynomial cannot have negative, zero, or falsely positive quadratic elements}.

This work starts with the presentation of a geometrical viewpoint with which the influence of the variation of the free term of a polynomial on the sign of the discriminant is established and, hence, on the possible number of real roots. The quadratic elements of a polynomial are also introduced in this section. This is followed by a section introducing Newton's Incomplete and Complete rules and also establishing a relationship between the quadratic elements through the roots of the polynomial and its derivatives: this allows one to express the ratio of two quadratic elements as ratio of the sums of squares of the differences of these roots or their reciprocals. Section 5 introduces Rosset's \cite{rosset} analysis of the discriminants of the cubic sectors of a polynomial and the introduction of a stricter necessary condition for the reality of all of the roots. The falsely positive quadratic elements are also introduced in this section and their association with cubics with complex roots is demonstrated. Section 6 proposes the modification of Newton's method and the following section offers an in-depth analysis of an example with a polynomial of degree 8, followed by comparison of the modified Newton's method with the original Newton's method, the Descartes' rule of signs, Fourier's theorem, and Sturm's theorem. It is also shown that the modified Newton's method can be improved when considered in conjunction with the Descartes' rule of signs. Section 8 offers a detailed algorithm for the application of the modified Newton's method. Section 9 introduces a new necessary condition for the reality of all roots, based on the modified Newton's method. Another example is given --- a quintic polynomial --- with which this new necessary condition is demonstrated and other aspect of the modified Newton's method are illuminated.
In the final section of this work, an interesting connection between the discriminants of polynomials, the discriminants of their derivatives, and the quadratic elements is also established, following a ``discriminant of the discriminant" approach.

\section{On the Changes of Sign of the Discriminant of a Real Polynomial}

Consider a polynomial of degree $n$ with real coefficients in binomial form
\b
\label{p}
p(x) = \sum_{k=n}^0 {n \choose k} a_k x^k.
\e
(Without loss of generality, any polynomial can be written in binomial form.)

From the Fundamental Theorem of Algebra, a polynomial of degree $n$ has exactly $n$ roots $r_1, \ldots, r_n$. Some of the roots may be repeated. If a complex number is a root, then its complex-conjugate number is also a root. Hence the numbers of complex roots is always even number.

The Vandermonde matrix is the $n \times n$ matrix whose $i^{\mbox{\scriptsize th}}$ row is given by $1, r_i, \ldots, r_i^{n-1}$. Note that each row forms a geometric progression with ratio $r_i$. The determinant of this matrix is called Vandermonde determinant and it is given by $V_n = (-1)^{n(n-1)/2} \prod_{i<j}(r_i - r_j)$. This determinant is also called Vandermonde polynomial.

\begin{definition}
The discriminant $\Delta_n$ of the polynomial $p(x)$ of degree $n$ is defined as $a_n^{2n-2} \prod_{i<j}(r_i - r_j)^2$.
\end{definition}

Hence, the discriminant of a polynomial is $a_n^{2n-2}$ times the square of the determinant $V_n$ of the Vandermonde matrix: $\Delta_n  = a_n^{2n-2} V_n^2$.

The discriminant $\Delta_n$ is also equal, modulo multiplicative factor, to the Sylvester resultant $R(p, p')$ of $p(x)$ and its derivative $p'(x)$. (For example, for the cubic polynomial $a x^3 + b x^2 + c x + d$, the discriminant is $-27 a^2 d^2 + 18 a b c d - 4 a c^3 - 4 b^3 d + b^2 c^2$, while the resultant is equal to the discriminant, multiplied by $-a$.) The Sylvester resultant is the determinant of the $2n \times 2n$ Sylvester matrix $S(p, p')$, where, in general, the $(n+m) \times (n+m) $ Sylvester matrix $S(A, B)$ of two polynomials, $A(x) = \alpha_n x^n + \alpha_{n-1} x^{n-1} + \ldots + \alpha_0$ and $B(x) = \beta_m x^m + \beta_{m-1} x^{m-1} + \ldots + \beta_0$, of degrees $n$ and $m$, respectively, is constructed as follows. The first $m$ rows contain the coefficients $\alpha_n, \alpha_{n-1}, \ldots , \alpha_0$ of the polynomial $A(x)$, followed by zeros and shifted on each subsequent row by $0, 1, \ldots, m-1$ steps. The remaining $n$ rows contain the coefficients $\beta_m, \beta_{m-1}, \ldots , \beta_0$ of the polynomial $B(x)$, followed by zeros and shifted on each subsequent row by $0, 1, \ldots, n-1$ steps. Defining $\alpha_i = 0$, for $i < 0$ and for $i > n$, and defining $\beta_i = 0$, for $i < 0$ and for $i > m$, one immediately sees that for $1 \le i \le m$ the elements of the Sylvester matrix are given by $S_{i, j}(A, B) = \alpha_{n+i-j}$, while for $m+1 \le i \le m+n$ they are given by: $S_{i, j}(A, B) = \beta_{i-j}$. The discriminant of the polynomial $p(x)$ is given by: $\Delta_n = (-1)^{n(n-1)/2} a_n^{-1} R(p, p')$.

The polynomial (\ref{p}) can be viewed as an element of a congruence of polynomials
\b
\mathcal{P} = \left\{ \sum_{k=n}^1 {n \choose k} a_k x^k + \alpha \,\, \vert \,\,  \alpha \in \mathbb{R} \right\},
\e
the graphs of which foliate the $xy$-plane by the continuous variation of the foliation parameter $\alpha$. The elements in this congruence differ from each other by the value of their free term and they all have the same set of stationary points $\{ \mu_i \}_{i=1}^N$ --- the real roots of the equation $p'(x) = n \sum_{k=n}^1 {n - 1 \choose k} a_k x^{k-1} = 0$.  Clearly, $N = n - 1 - 2m$, where $2m$ is the number of complex roots with $m \le (n - 1)/2$ being a non-negative integer.

Within the congruence, there are exactly $N$ (not necessarily all different) {\it ``privileged"} polynomials
\b
p_i(x) = p(x) - p(\mu_i),
\e
for which the abscissa is tangent to their graph at the stationary point $\mu_i$. Each {\it privileged} polynomial $p_i(x)$ has a zero discriminant, as each one of them has at least one root which has multiplicity of 2 or higher.

The variation of the foliation parameter $\alpha$ produces continuous sets of polynomials such that all discriminants within a set have the same sign. Each time the parameter $\alpha$ ``{\it traverses}" the value $a_0 - p(\mu_i)$, a new continuous set of polynomials is obtained and the discriminants in this new set have sign opposite to that of the previous set (unless $\alpha$ ``traverses" an even number of $a_0 - p(\mu_i)$ at the same time; in which case the discriminants of the polynomials in the new set will bounce back from 0).

\begin{theorem} $\phantom{emp}$
\begin{itemize}
\item[(i)] If the discriminant of a polynomial of degree $n \ge 4$ is positive, then the number of complex roots of the polynomial is a multiple of 4.
\item[(ii)] If the discriminant of a polynomial of degree $n \ge 3$ is negative, then there are $2m + 1$ pairs of complex-conjugate roots, where $0 \le m \le (n - 2)/4.$
\item[(iii)] If the discriminant of a polynomial of degree $n \ge 2$ is zero, then there is a repeated real or complex root.
\end{itemize}
\end{theorem}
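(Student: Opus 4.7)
The approach is to compute the sign of $\Delta_n$ directly from its definition $\Delta_n = a_n^{2n-2}\prod_{i<j}(r_i - r_j)^2$, by classifying the factors $(r_i - r_j)^2$ according to the nature (real versus complex) of the two roots involved. Part (iii) is immediate: since $a_n \neq 0$, $\Delta_n = 0$ forces $(r_i - r_j)^2 = 0$ for some $i \neq j$, hence a repeated root (real or complex). So I would assume henceforth that all roots are pairwise distinct and $\Delta_n \neq 0$.

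For parts (i) and (ii), I list the $n$ roots as $n - 2k$ real roots together with $k$ complex-conjugate pairs $\{c_\ell, \bar c_\ell\}_{\ell = 1}^{k}$, and group the $n(n-1)/2$ factors of $\prod_{i<j}(r_i - r_j)^2$ by pair type. A pair of two real roots contributes a positive factor $(r_i - r_j)^2 > 0$. A real root $r$ combined with both members of one conjugate pair $\{c, \bar c\}$ contributes $(r - c)^2 (r - \bar c)^2 = \bigl|(r-c)(r-\bar c)\bigr|^2 > 0$, since $(r-c)$ and $(r-\bar c)$ are themselves complex conjugates. Each single conjugate pair contributes $(c_\ell - \bar c_\ell)^2 = -4(\mathrm{Im}\, c_\ell)^2 < 0$. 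Finally, two distinct conjugate pairs $\{c_i, \bar c_i\}$ and $\{c_j, \bar c_j\}$ yield four factors which regroup, via $\overline{c_i - c_j} = \bar c_i - \bar c_j$ and $\overline{c_i - \bar c_j} = \bar c_i - c_j$, as $|c_i - c_j|^4 > 0$ and $|c_i - \bar c_j|^4 > 0$. Since $a_n^{2n-2} > 0$, the net conclusion is $\mathrm{sign}(\Delta_n) = (-1)^k$, with exactly one negative factor per complex-conjugate pair of roots.

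From here, parts (i) and (ii) fall out arithmetically. For (i), $\Delta_n > 0$ forces $k$ to be even, so the total number $2k$ of complex roots is divisible by $4$; the hypothesis $n \geq 4$ simply ensures that a nonzero even value of $k$ is possible. For (ii), $\Delta_n < 0$ forces $k$ to be odd, and writing $k = 2m + 1$, the constraint $2k \leq n$ gives $4m + 2 \leq n$, i.e., $0 \leq m \leq (n-2)/4$.

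I do not anticipate any serious obstacle. The one place where care is needed is the cross-term bookkeeping for two distinct conjugate pairs: verifying that every factor $(r_i - r_j)^2$ appears exactly once and is correctly partnered with its conjugate counterpart, so that each quadruple cleanly collapses to a positive modulus-fourth-power and the whole sign analysis reduces to counting the internal contributions $(c_\ell - \bar c_\ell)^2$.
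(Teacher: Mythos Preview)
Your proposal is correct and follows essentially the same approach as the paper: both arguments compute the sign of $\Delta_n$ directly from the product definition by classifying the factors $(r_i-r_j)^2$ according to whether the roots involved are real or complex, and both reduce to $\mathrm{sign}(\Delta_n)=(-1)^k$ with $k$ the number of complex-conjugate pairs. Your bookkeeping is slightly more explicit (the formulas $(c_\ell-\bar c_\ell)^2=-4(\mathrm{Im}\,c_\ell)^2$ and the modulus-fourth-power regrouping for two distinct conjugate pairs), whereas the paper phrases the same observations more informally, but there is no substantive difference.
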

\begin{proof}
By definition, the discriminant of a polynomial of degree $n$ is given by $\Delta_n = a_n^{2n-2} \prod_{i < j} (x_i - x_j)^2$, where $x_1, \ldots, x_n$ are all $n$ roots (real and complex) of the polynomial. If all roots are real, then the differences $x_i - x_j$ for $i < j$ are all real and their squares are positive. Hence, $\Delta_n > 0$ in this case. If there is just one pair of complex conjugate roots (and, thus, $n-2$ real roots), then the difference of the complex roots will be purely imaginary and its square will be negative. The difference of a real root and one of the pair of complex conjugate roots will be a number, complex conjugate to the difference of the same real root and the other complex root. The product of these two complex conjugate differences is a real number and so its square is positive. The difference of two real roots is real and its square is positive. Therefore, the determinant of the polynomial with one pair of complex conjugate roots is negative. If there are $k$ pairs of complex conjugate roots, the difference of the complex conjugate roots within each pair of such will be purely imaginary; all other differences will be either real or will appear as complex conjugate pairs. Hence, the determinant of the polynomial will be positive if $k$ is even and negative if $k$ is odd (as it will be a product involving $k$ negative factors. One can invert this argument: if the discriminant is negative, then there is an odd number $k = 2m + 1$ pairs of complex-conjugate roots. Clearly, the number of complex roots is $2k = 4m + 2$ and this cannot exceed $n$, hence $0 \le m \le (n-2)/4$.

\begin{center}
\begin{tabular}{cc}
\includegraphics[width=68mm, height=58mm]{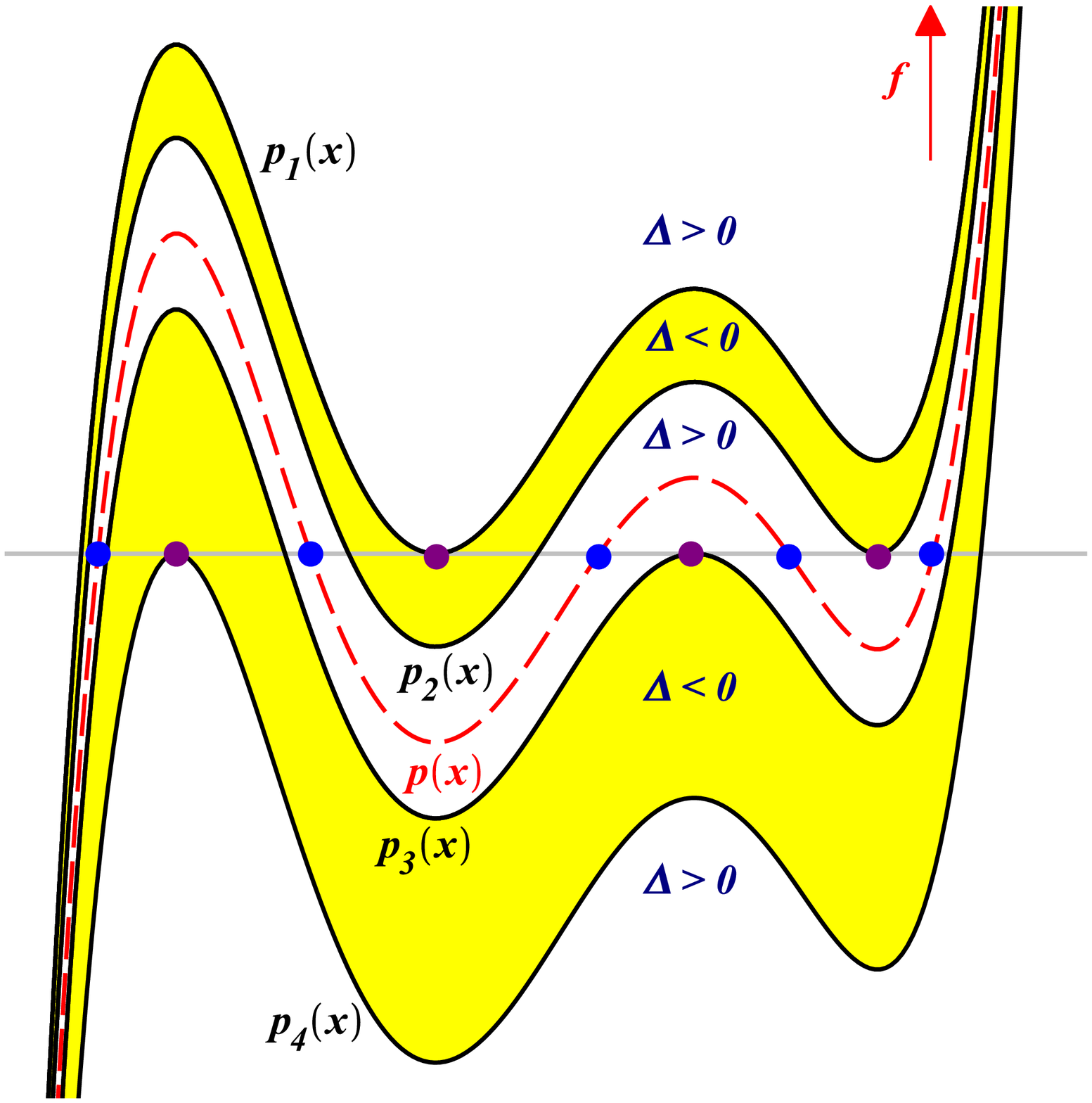} & \includegraphics[width=68mm, height=58mm]{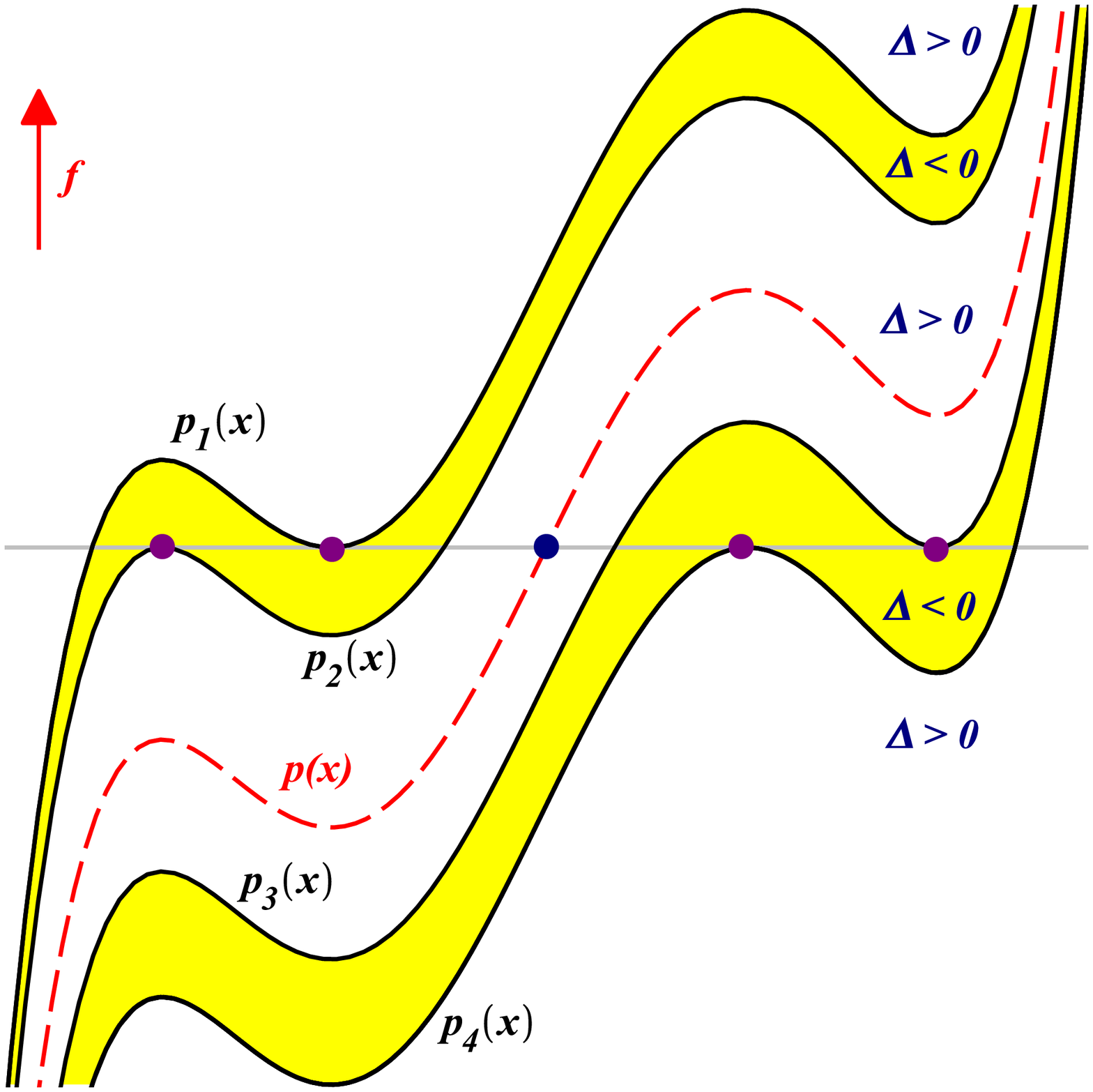}  \\
{\scriptsize {\bf Figure 1a}} &  {\scriptsize {\bf Figure 1b}} \\
\multicolumn{1}{c}
{\begin{minipage}{18em}
\scriptsize
\vskip-2.00cm
The quintic polynomial $x^5 -3 x^4 - x^3 + 7 x^2 - (3/2) x + f$ with varying free term $f$. The four {\it privileged} polynomials $p_i(x)$ have a double root each. The free terms of these polynomials are $0.082, \,\, -0.944, \,\, -2.837,$ and $-5.530$, respectively. When $f$ is sufficiently large, that is, for $f > 0.082$, the polynomial is ``above" $p_1(x)$. The discriminant is positive and the polynomial has one real root and two pairs of complex conjugate roots. In the polynomial band bounded by the privileged polynomials $p_1(x)$ and $p_2(x)$, that is, for $-0.944 < f < 0.082$, all polynomials have a negative discriminant and, hence, 3 real roots and a pair of complex conjugate roots. In the next polynomial band, bounded by $p_2(x)$ and $p_3(x)$, that is, $-2.837 < f < -0.944$, all polynomials have positive discriminant and five real roots. With the decrease of the foliation parameter $f$, the situation starts to ``unwind": the polynomial band between $p_3(x)$ and $p_4(x)$, namely, for $-5.530 < f < -2.837$, is characterised by negative discriminants and, thus, 3 real roots and one pair of complex conjugate roots. Finally, ``below" $p_4(x)$, that is, for that is, for $f < -5.530$, all polynomials have positive discriminants and 1 real root and two pairs of complex conjugate roots.
\end{minipage}}
&
\multicolumn{1}{c}
{\begin{minipage}{18em}
\scriptsize
Another quintic polynomial, $5 x^5 + (1/10) x^4 - 8 x^3 - (1/4) x^2 + 4 x + f$, with varying free term $f$ and, again, with four {\it privileged} polynomials $p_i(x)$ having a double root each. The free terms of these polynomials are $1.215, \,\, 0.834, \,\, -0.572,$ and $-1.117$, respectively. As the free term $f$ is varied from $+\infty$ to $-\infty$, the number of real roots of the resulting polynomials does not necessarily increase monotonically from one polynomial band to another until only real roots are encountered (as in Figure 1a). In fact, the number of real roots may increase or decrease as one ``shifts through" the different polynomial bands. This polynomial cannot have five real roots. When $f$ is sufficiently large, that is, for $f > 1.215$, the discriminant is positive and the polynomial has one real root and two pairs of complex conjugate roots. In the polynomial band bounded by  $p_1(x)$ and $p_2(x)$, that is, for $0.834 < f < 1.215$, all polynomials have a negative discriminant and, hence, 3 real roots and a pair of complex conjugate roots. However, in the next polynomial band, bounded by $p_2(x)$ and $p_3(x)$, that is, for $-0.572 < f < 0.834$, all polynomials have positive discriminant and only one real root and two pairs of complex-conjugate roots. The polynomial band between $p_3(x)$ and $p_4(x)$, namely, for $-1.117 < f < -0.572$, is characterised by negative discriminants and, thus, there are 3 real roots and one pair of complex conjugate roots Finally, ``below" $p_4(x)$, that is, for that is, for $f < -1.117$, all polynomials have positive discriminants and 1 real root and two pairs of complex conjugate roots.
\end{minipage}} \\
& \\
\end{tabular}
\end{center}

On the other hand, a polynomial with no complex roots has a positive discriminant. Every two pairs of complex conjugate roots are also associated with a positive discriminant. Hence, if the discriminant of a polynomial is positive, then the number of complex roots of the polynomial of degree 4 or higher is a multiple of 4, namely, it is $4m$, with $m \ge 0$.

Finally, if there is a repeated root (regardless of whether it is real or complex), the discriminant will be zero as at least one of the factors $x_i - x_j$ will be zero.
\end{proof}

In this manner, different {\it ``polynomial bands"} are formed in the $xy$-plane and within each of these polynomial bands, the discriminants of all polynomials have the same sign. These bands are bounded by the {\it privileged} polynomials --- see Figure 1a and Figure 1b.

If a polynomial has only real roots, then all of its derivatives have only real roots \cite{newton}. The converse is not true: if all derivatives of a polynomial have only real roots, it does not follow that the roots of the polynomial are all real \cite{newton}. For the example with the quartic polynomial $x^4 - 2x^3 - 2x^2 + 5x + 10$, which has four complex roots (referred to in the Introduction), all of the derivatives of the polynomial have real roots only. Hence, all of the derivatives of a polynomial having only real roots is a necessary condition for the polynomial to have only real roots. If any of the derivatives of a polynomial has at least one pair of complex roots, then the polynomial cannot have only real roots. This means that the number of stationary points of the polynomial will be strictly less than $n - 1$. Hence, the number of {\it privileged} polynomials $p_i(x)$ will be strictly less than $n - 1$. Therefore, the number of different polynomial bands and, hence, the number of continuous sets of polynomials with discriminants of the same sign will be strictly less than $n - 1$. In result, under the variation of the foliation parameter $\alpha$, there will not be enough polynomial bands in the congruence so that the polynomials could alternate the signs of their discriminants from one polynomial band into the other until a positive discriminant and possibility of real roots only is revealed: an even number of polynomial bands is eliminated by the reduction by an even number of {\it privileged} polynomials, which stems, in turn, from the reduction by an even number of the stationary points, due to some derivative(s) having complex roots (whose number is always even). On the contrary, if a polynomial has only real roots, this could only happen in the inner-most polynomial band of $n$ such bands --- for which the discriminant is positive.

Note that, as the free term is varied from $-\infty$ to $+\infty$, the number of real roots of the resulting polynomials does not necessarily increase monotonically from one polynomial band to another (until a situation with real roots only is reached --- as in Figure 1a). In fact, the number of real roots may increase or decrease as one ``shifts through" the different polynomial bands (Figure 1b).

In order for the number of polynomial bands to be reduced (always by an even number), it would be sufficient if $p'(x)$ has at least one pair of complex conjugate roots. Again, note that this is not a necessary and sufficient condition: if $p'(x)$ has $n - 1$ real roots, all polynomial bands will be present, but the polynomial $p(x)$ should not necessarily have $n$ real roots (as in Figure 1b). In order that $p'(x)$ does not to have $n -1$ real roots, it would be sufficient if $p''(x)$ does not have $n - 2$ real roots. Continuing recursively, one arrives at the bottom rung, $(d^{n-2} / dx^{n-2}) p(x) = (n!/2!) (a_n \,x^2 + 2a_{n-1} \, x + a_{n-2})$, and having no real roots at this bottom rung is a sufficient condition for the polynomial $p(x)$ to have at least a pair of complex-conjugate roots. Thus, if the discriminant of the bottom-rung quadratic polynomial, or the so called {\it quadratic element},
\b
A_{n-1} = a_{n-1}^2 - a_n \, a_{n-2}
\e
is negative, it will guarantee the elimination of at least two of the polynomial bands and, hence, guarantee the impossibility for the polynomial to have real roots only.

\section{The Incomplete and the Complete Newton's Rules}

As one will be interested in the number of real roots of the polynomial, if not all powers of the variable $x$ are present in $p(x)$, the missing terms can be ``restored" in a very simple manner --- through a coordinate transformation of the type $x \to x + \beta$ (which may need to be repeated). This works in direction, opposite to that of depressing a polynomial. The new polynomial $p(x + \beta)$ differs from the original polynomial $p(x)$ only in that its graph is translated $\beta$ units to the left (if $\beta$ is positive) or $\beta$ units to the right (if $\beta$ is negative): both polynomials have the same number of real roots.

To illustrate this, consider the coefficients of the polynomial $p(x)$. Starting from $a_n$ and moving towards $a_0$, let $a_m$ (with $m \le n$) be the last non-zero coefficient, that is, $a_{m-1} = 0$ (the term $a_{m-1} x^{m-1}$) is not present in the polynomial. Hence, one has $p(x) = {n \choose n} a_n x^n + \ldots + {n \choose m} a_m x^m + {n \choose m-2} a_{m-2} x^{m-2}$ $ + \ldots + {n \choose 0} a_0 x$. The missing term $a_{m-1} x^{m-1}$ can be introduced, with arbitrary coefficient $a_{m-1}$, through the coordinate transformation: $x \to x + (a_{m-1}/a_m)/(n - m + 1)$. Therefore, the number of real roots of the original polynomial $ \ldots + {n \choose m} a_m x^m + {n \choose m-2} a_{m-2} x^{m-2} + \ldots$ will be the same as the number of real roots of the new  polynomial $\ldots + {n \choose m} a_m x^m + {n \choose m-1} a_{m-1} x^{m-1} + {n \choose m-2} (1/a_m) \{(n- m - 2)/[2(n-m+1)] a_{m-1}^2 + a_{m-2} a_m \} x^{m-2} + \ldots$ Although the polynomial obtained in this way is different from the original, their graphs exhibit translational invariance and the two polynomials have the same number of real roots.

{\it Without loss of generality, for the purpose of real root counting, one can assume that none of the coefficients of the polynomial $p(x)$ is zero}.

{\it This argument can also be extended to ensure that the quadratic elements are also all nonzero}.

Having already encountered the quadratic element $A_{n-1} = a_{n-1}^2 - a_n \, a_{n-2}$, consider the polynomial $p^\dagger(x)$, reciprocal to $p(x)$:
\b
\label{pd}
p^\dagger(x) = \sum\limits_{k=0}^n {n \choose k} a_k x^{n-k} = x^n \, p(\frac{1}{x}).
\e
The coefficients of the reciprocal polynomial $p^\dagger(x)$ are the coefficients of $p(x)$ in reverse order. The roots of two reciprocal polynomials are reciprocal, that is, $r$ is a root of $p(x)$ if, and only if, $1/r$ is a root of $p^\dagger(x)$. Hence, the polynomials  $p(x)$ and $p^\dagger(x)$ have the same number of real roots. Applying to $p^\dagger(x)$ the arguments applied earlier to $p(x)$, one arrives at the {\it quadratic element}
\b
A_{1} = a_{1}^2 - a_2 \, a_{0}.
\e
If it is negative, it is then sufficient for $p^\dagger(x)$, and hence for $p(x)$, to have at least a pair of complex conjugate roots.

Applying these arguments to the reciprocal polynomial of the first derivative of $p^\dagger(x)$, that is, to $n \sum_{k=n-1}^0 {n - 1 \choose k} a_k x^k$, yields the {\it quadratic element}
\b
A_{n-2} = a_{n-2}^2 - a_{n-1} \, a_{n-3}.
\e
If it is negative, the polynomial $p(x)$ cannot have $n$ real roots.

From the polynomial, reciprocal to $n \sum_{k=n-1}^0 {n - 1 \choose k} a_k x^k$, one gets the {\it quadratic element}
\b
A_{2} = a_{2}^2 - a_3 \, a_{1}.
\e
If it is negative, the polynomial $p(x)$ cannot have $n$ real roots.

Continuing in this vein, one can ``{\it resolve}" the polynomial $p(x)$ into a collection of {\it quadratic elements}
\b
A_{m-1} = a_{m-1}^2 - a_m \, a_{m-2},
\e
where $2 \le m \le n$ (see also \cite{wagner, nic, acosta}).

\begin{definition}
The quadratic polynomials which can be obtained from the original polynomial $p(x)$ by taking in binomial form three consecutive coefficients from it are called \linebreak \underline{quadratic sectors} of the polynomial.
\end{definition}

The {\it quadratic elements} are thus the discriminants (modulo positive multiplicative factors) of all quadratic sectors of the polynomial $p(x)$.

Newton states  the following on page 365 of \cite{newton}: ``{\it Although it is a certain Criterion, that there are two impossible Roots as often as the Square of any Term (...) is deficient of the product of the Terms adjacent, yet it is no Proof that the Roots are real if the Square of any Term (...) exceeds the product of the adjacent Terms;...}" and, ibidem, ``{\it Lastly, every Rule, depending upon the Comparison of the Square of a Term with the Product of the adjacent Terms on either Side, must sometimes fail to discover the impossible Roots, because the Number of such Comparisons being always less by Unity than the Number of Quantities in the Equation,...}"

Hence, one has the following

\begin{theorem}[Newton, see also Theorem 4 in \cite{wagner}]
A sufficient condition for the existence of complex roots is the occurrence of a non-positive quadratic element.
\end{theorem}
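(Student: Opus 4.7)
My strategy is to realize every quadratic element $A_{m-1}$ as a positive multiple of the discriminant of a specific quadratic polynomial extracted from $p(x)$ by a chain of differentiation and reciprocal-polynomial operations. If $p(x)$ has only real roots, the extracted quadratic must also have only real roots; a non-positive $A_{m-1}$ means its discriminant is $\le 0$, which would force at least a repeated real root of the quadratic, and a \emph{negative} $A_{m-1}$ would force an actual pair of complex-conjugate roots, contradicting reality-inheritance. So the theorem reduces to displaying the correct sequence of operations and invoking two standard preservation facts.

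Using the preliminary reductions developed just before the theorem statement, I would first assume without loss of generality that all coefficients of $p(x)$ are nonzero (so that $0$ is not a root and the reciprocal polynomial is well defined) and that all quadratic elements are nonzero, which reduces ``non-positive'' to ``strictly negative''. Then, for a fixed $m$ with $2 \le m \le n$, I would execute the following three-step procedure: (a) differentiate $p(x)$ a total of $m-2$ times, which in the binomial-form representation yields, up to a positive prefactor, a polynomial of degree $n-m+2$ whose coefficient sequence (from constant term upward) is $a_{m-2}, a_{m-1}, \ldots, a_n$; (b) form the reciprocal polynomial of this derivative, reversing the coefficient sequence to $a_n, a_{n-1}, \ldots, a_{m-2}$; (c) differentiate the reciprocal $n-m$ further times, killing off all but the three bottom coefficients and leaving, up to a positive factor, the quadratic $a_{m-2}\, x^2 + 2 a_{m-1}\, x + a_m$. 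A direct computation confirms that its discriminant equals $4(a_{m-1}^2 - a_m a_{m-2}) = 4 A_{m-1}$.

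To finish, I would invoke the two facts already cited in the excerpt: reality of all roots is preserved under differentiation (Rolle's theorem forces a real critical point between any two real roots), and $p(x)$ and $p^\dagger(x)$ have the same number of real roots via $r \leftrightarrow 1/r$. Composing these, the final quadratic inherits the property ``all roots real'' from $p(x)$. Hence, if $A_{m-1} < 0$, the final quadratic has negative discriminant and therefore a pair of complex-conjugate roots, which is incompatible with $p(x)$ having only real roots. The contrapositive of this chain is exactly the sufficiency claim of the theorem.

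The main technical obstacle I anticipate is the index and prefactor bookkeeping in steps (a)--(c): verifying that each differentiation in binomial form contributes only \emph{positive} multiplicative constants (so that no sign is flipped), that the reciprocal truly reverses the coefficient list as indexed in binomial form, and that the final quadratic emerges with coefficient pattern $(a_{m-2}, 2a_{m-1}, a_m)$ rather than some disguised rescaling. Everything else (Rolle, reciprocals, and reading off a negative quadratic discriminant) is entirely routine.
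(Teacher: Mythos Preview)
Your proposal is correct and follows essentially the same approach as the paper: the paragraphs leading up to the theorem statement in Section~3 build exactly this argument, extracting each $A_{m-1}$ as (a positive multiple of) the discriminant of a quadratic obtained from $p(x)$ by a chain of differentiations and passages to reciprocal polynomials, and invoking the same two preservation facts. The only cosmetic difference is that the paper presents the extraction through a sequence of particular cases ($A_{n-1}$, $A_1$, $A_{n-2}$, $A_2$, \ldots) and then says ``continuing in this vein'', whereas you give a single uniform recipe (differentiate $m-2$ times, pass to the reciprocal, differentiate $n-m$ more times); your bookkeeping is correct and your handling of the zero-quadratic-element case via the preliminary reduction matches the paper's own convention.
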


Newton did not prove his Rule and it remained unproven for 182 years --- until 1865 when Sylvester \cite{syl} proved it.

Suppose that the quadratic elements are all non-zero (if some are, then a coordinate translation of the type considered earlier, would cure this; vanishing quadratic elements were not considered by Newton, but such trick was alluded to by him \cite{newton}, pages 373 and further, and used by Sylvester in his proof \cite{syl} of Newton's Rule).

Introduce next the quadratic elements $A_0 = a_0^2$ and $A_n = a_n^2$, as the two end-elements of the sequence of the {\it quadratic elements} $A_i$, $i = 1, \ldots, n-1$, and consider their signs. When presenting his proof of Newton's rule, Sylvester states \cite{syl}: ``{\it By a group of negative signs, or a negative group, if we understand a sequence of negative signs, with no positive sign intervening, this incomplete rule may be stated otherwise, as follows: The number of imaginary roots of an algebraic function cannot be less than the number of negative groups in the complete series of its quadratic elements}". This is referred to as {\it Newton's Incomplete Rule}. As the two end quadratic elements are both positive, this lower bound is always an even number.

\begin{definition}
The coefficients $a_m$ ($0 \le m \le n$) of the polynomial are called \underline{simple} \underline{elements}.
\end{definition}

Consider the natural sequence of the simple elements, together with the natural sequence of the quadratic elements, written underneath them in such way that $a_m$ is directly above $A_m$, forming a double sequence. Sylvester \cite{syl} gives the following

\begin{definition}
In the double sequence of simple and quadratic elements,  \raisebox{-.1cm}{$\stackrel{\scriptstyle{a_m}}{\scriptstyle{A_m}}$} is called an \underline{adjacent coefficient couple of elements} \,\, and \raisebox{-.1cm}{$\,\,\, \stackrel{\scriptstyle{a_m \,\,\, a_{m+1}}}{\scriptstyle{A_m \,\,\, A_{m+1}}}$} is called an \underline{adjacent coefficient} \underline{couple of successions}.
\end{definition}

Using Sylvester's terminology \cite{syl}, for each adjacent coefficient couple of successions, there are four possibilities:

\begin{definition} $\phantom{emp}$
\begin{itemize}
\item [(i)] \underline{Double Permanence} (denoted by $pP$) --- when the simple elements are of the same sign and the quadratic elements are also of the same sign;
\item [(ii)] \underline{Double Variation} (denoted by $vV$) --- when the simple elements have opposite sign and the  quadratic elements have opposite sign;
\item [(iii)] \underline{Permanence Variation} (denoted by $pV$)--- when the simple elements have the same sign and the quadratic elements have opposite sign;
\item [(iv)] \underline{Variation Permanence} (denoted by $vP$)--- when the simple elements have opposite sign, but the quadratic elements have the same sign.
\end{itemize}
\end{definition}

The {\it Complete Newton's Rule}, stated by Newton \cite{newton} and proved by Sylvester \cite{syl}, is:

\begin{theorem}[Newton]$\phantom{emp}$
\begin{itemize}
\item [(1)] {\it The number of negative roots is less than or equal to the number of double permanencies ($pP$)}.
\item [(2)] {\it The number of positive roots is less than or equal to the number of variation permanencies ($vP$)}.
\end{itemize}
\end{theorem}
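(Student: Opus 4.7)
The plan is to reduce statement (2) to statement (1) by a symmetry argument, then to prove (1) by induction on the degree $n$ using the derivative polynomial, the bulk of the work being a combinatorial accounting of how the double sequence changes under differentiation.

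First, I would observe that the substitution $x \mapsto -x$ multiplies $a_k$ by $(-1)^k$, which leaves every quadratic element $A_m = a_m^2 - a_{m-1}\, a_{m+1}$ unchanged (both $a_m^2$ and $a_{m-1}\, a_{m+1}$ pick up a factor of $+1$), while flipping each adjacent simple couple from permanence to variation and vice versa. Consequently $pP \leftrightarrow vP$ and $pV \leftrightarrow vV$ under this substitution, and since negative roots of $p(x)$ are positive roots of $p(-x)$, part (2) applied to $p(-x)$ is equivalent to part (1) applied to $p(x)$. It therefore suffices to prove (1).

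Second, I would induct on $n$, the base case being a direct check for quadratics. For the inductive step, recall from the discussion preceding the theorem that, after dividing by $n$, $p'(x)$ is in binomial form of degree $n-1$ with simple elements $a_1, a_2, \ldots, a_n$ and interior quadratic elements $A_2, A_3, \ldots, A_{n-1}$, together with new end-elements $a_1^2$ and $a_n^2$. Thus the double sequence of $p'$ coincides with that of $p$ on positions $2, \ldots, n-1$, while the leftmost couple of $p'$ (at its positions $0,1$) has its left quadratic element replaced from $A_1$ to $a_1^2 > 0$, and the couple at position $0$ of $p$ disappears entirely. By Rolle's theorem $N_-(p) \le N_-(p') + 1$, and by the inductive hypothesis $N_-(p') \le pP(p')$, so $N_-(p) \le pP(p') + 1$.

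To close the induction I would show that $pP(p) - pP(p') \ge 1$ whenever Rolle is tight, and that otherwise the Rolle bound is already strict. This requires a case analysis on the signs of $a_0, a_1, a_2$ and of $A_1$, tracking separately the contribution of the new couple at position $0$ of $p$ and the possible type-change of the couple at positions $(1,2)$ when $A_1$ and $a_1^2$ differ in sign. One then shows, via a local sign argument on $p$ near $x = 0$ and near $x = -\infty$, whether the smallest negative root of $p$ forces $p'$ to have an additional critical point in $(-\infty, 0)$ not mandated by Rolle between consecutive roots of $p$; the conclusion is that the deficit is recouped in exactly the configurations in which position $0$ contributes a $pP$. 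Exceptional cases (vanishing coefficients or vanishing quadratic elements) are removed by the coordinate translation $x \to x + \beta$ already introduced in the paper, which preserves root counts while shifting the polynomial into generic non-zero position; a standard continuity argument then recovers the general statement. The delicate sign bookkeeping at positions $0$ and $1$ is the main obstacle, and is the heart of Sylvester's original 1865 proof.
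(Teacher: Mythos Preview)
The paper does not actually supply a proof of this theorem: it states the Complete Newton's Rule, attributes the statement to Newton and the proof to Sylvester \cite{syl}, and moves on to the corollaries. So there is no in-paper argument to compare your proposal against.

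On its own terms, your proposal is a reasonable outline of a Sylvester-style strategy, and the two reductions you propose are correct: the substitution $x\mapsto -x$ does fix every $A_m$ and swaps $pP\leftrightarrow vP$, so (1) and (2) are equivalent; and the normalised derivative does have simple elements $a_1,\ldots,a_n$ and interior quadratic elements $A_2,\ldots,A_{n-1}$, so the double sequences of $p$ and $p'$ agree except at the left end. The Rolle bound $N_-(p)\le N_-(p')+1$ is also correct when roots are counted with multiplicity.

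The genuine gap is the step you yourself flag as ``the main obstacle.'' You assert that $pP(p)\ge pP(p')+1$ exactly when Rolle is tight and that otherwise the Rolle inequality is already strict, but you do not carry out the case analysis; you describe it only as ``a local sign argument on $p$ near $x=0$ and near $x=-\infty$'' and then defer to Sylvester. That is precisely the non-trivial content of the theorem. In particular, the dichotomy as you have phrased it is not obviously exhaustive: one must rule out configurations where the leftmost couple of $p$ is not a $pP$ \emph{and} Rolle happens to be tight, and this requires controlling simultaneously the sign of $a_0a_1$, the sign of $A_1$, and the behaviour of $p$ on $(-\infty,0)$, including the possibility that $p$ has no negative roots at all (so Rolle contributes nothing) while $p'$ does. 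Until that bookkeeping is written out, what you have is a plan rather than a proof.
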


Two immediate corollaries follow \cite{syl}:

\begin{corollary}[Sylvester]
The number of real roots is less than or equal to the number of double permanencies plus the number of variation permanencies, or, simply, the number of permanencies in the sequence of quadratic elements.
\end{corollary}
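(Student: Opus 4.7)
The plan is to observe that this corollary follows by summing the two inequalities given in the Complete Newton's Rule (Theorem 3) and then identifying the resulting quantity with the number of permanencies in the sequence of quadratic elements. The only preliminary point that needs to be made is that, under the standing assumption that none of the simple elements $a_m$ vanish (in particular $a_0 \neq 0$), the polynomial has no zero root, so every real root is either strictly positive or strictly negative.

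First I would write: number of real roots $=$ number of positive roots $+$ number of negative roots. Applying part (1) of Theorem 3 to the negative roots and part (2) to the positive roots, and adding the two bounds, yields
\b
\mbox{\#(real roots)} \,\le\, \mbox{\#}(pP) + \mbox{\#}(vP).
\e
This already gives the first form of the corollary.

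Next I would justify the reformulation. By Definition 5, a couple of successions is counted as $pP$ exactly when the two simple elements $a_m, a_{m+1}$ agree in sign \emph{and} the two quadratic elements $A_m, A_{m+1}$ agree in sign, while it is counted as $vP$ when the two simple elements differ in sign but $A_m, A_{m+1}$ still agree in sign. The other two types, $vV$ and $pV$, are precisely the couples in which $A_m$ and $A_{m+1}$ differ in sign. Hence the adjacent pairs of quadratic elements with the same sign, i.e., the permanencies in the sequence $A_0, A_1, \ldots, A_n$, are exactly the union of the $pP$ and $vP$ couples, and these two classes are disjoint. Therefore $\mbox{\#}(pP) + \mbox{\#}(vP)$ equals the number of permanencies in the sequence of quadratic elements, completing the proof.

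There is no real obstacle here: the only thing to be careful about is the role of the two end quadratic elements $A_0 = a_0^2$ and $A_n = a_n^2$, which are both positive, so that the counting of permanencies in the quadratic-element sequence genuinely matches the couple-of-successions formulation used in Theorem 3, and the exclusion of zero as a root, which is automatic under the assumption (already invoked in the paper) that no simple element vanishes.
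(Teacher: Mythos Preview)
Your proposal is correct and matches the paper's approach: the paper simply states this as an ``immediate corollary'' of Theorem 3 without further argument, and your summation of the two bounds from the Complete Newton's Rule together with the observation that the $pP$ and $vP$ couples are precisely the permanencies in the quadratic-element sequence is exactly the intended justification.
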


\begin{corollary}[Sylvester]
The number of complex roots is greater than or equal to the number of variations in the sequence of quadratic elements.
\end{corollary}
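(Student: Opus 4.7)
The plan is to derive Corollary 2 directly from Corollary 1, which has just been established, by a simple counting argument based on the structure of the double sequence.

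First I would note that, under the standing assumption that every simple element and every quadratic element is non-zero (so that neither variations nor permanencies are ambiguous), the full sequence of quadratic elements runs from $A_0 = a_0^2$ to $A_n = a_n^2$ and therefore contains $n+1$ entries. Consequently there are exactly $n$ adjacent pairs $(A_m, A_{m+1})$, and each such pair contributes either a permanence or a variation to the sign sequence of the quadratic elements. Writing $P_Q$ for the number of permanencies and $V_Q$ for the number of variations, we have $P_Q + V_Q = n$.

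Next I would recall the classification of adjacent coefficient couples of successions: in each of the four types $pP$, $vV$, $pV$, $vP$, the second letter records whether the quadratic elements form a permanence or a variation. Hence the types $pP$ and $vP$ are exactly the ones contributing a permanence to the quadratic sequence, while $pV$ and $vV$ contribute variations. Thus the total number of permanencies among the quadratic elements equals the number of $pP$'s plus the number of $vP$'s, which by Corollary 1 is an upper bound on the number of real roots of $p(x)$.

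Finally, using the Fundamental Theorem of Algebra as already cited, the total number of roots counted with multiplicity is $n$, so the number of complex roots equals $n$ minus the number of real roots. Combining this identity with the inequality of Corollary 1 yields
\begin{equation*}
\#\{\text{complex roots}\} \;=\; n - \#\{\text{real roots}\} \;\geq\; n - P_Q \;=\; V_Q,
\end{equation*}
which is precisely the claim. There is no real obstacle here beyond being careful about the endpoint elements $A_0$ and $A_n$; since both are squares, they are positive, so they legitimately enter the counting and do not require separate treatment. The corollary is thus essentially a bookkeeping consequence of the Complete Newton's Rule and the partition $P_Q + V_Q = n$.
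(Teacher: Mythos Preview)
Your argument is correct and is precisely the ``immediate'' step the paper has in mind: the paper does not spell out a separate proof of this corollary but simply lists it, together with Corollary~3.1, as following directly from the Complete Newton's Rule. Your counting $P_Q + V_Q = n$ combined with Corollary~3.1 and the Fundamental Theorem of Algebra is exactly the intended one-line derivation.
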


Newton's claim that ``{\it it is no Proof that the Roots are real if the Square of any Term (...) exceeds the product of the adjacent Terms}" \cite{newton}, presents a necessary condition for the reality of all roots of a polynomial. Namely,
{\it if all roots of a polynomial are real, then all Newton inequalities hold, that is:}
\b
A_m = a_m^2 - a_{m-1} \, a_{m+1} > 0
\e
for all $m = 1, 2, \ldots, n-1$. The converse, as already discussed, is not true. See also Theorem 3 in \cite{wagner}. Note that $A_0 = a_0^2 > 0$ and $A_n = a_n^2 > 0$.

\section{Relationship between the Quadratic Elements through the Roots of the Real Polynomial and Its Derivatives}

Let $\lambda_1, \lambda_2, \ldots ,\lambda_n$ be the zeros of the polynomial (\ref{p}); $\mu_1, \mu_2, \ldots , \mu_{n-1}$ be the zeros of its first derivative; $\nu_1, \nu_2, \ldots ,\nu_{n-2}$ --- those of its second derivative and so forth until $\theta_1, \theta_2$ --- the zeros of its derivative of order $n-2$.

Using the relationship between the zeros of a polynomial and those of its derivative, see corollary (2.2) in Whitely \cite{white} and, also, \cite{monov}, one gets:
\b
\label{wh}
\sum_{i > j} (\lambda_i - \lambda_j)^2 & = & \frac{n^2}{(n-1)(n-2)} \sum_{i > j} (\mu_i - \mu_j)^2 \nonumber \\
& = & \frac{n^2}{(n-1)(n-2)} \, \frac{(n-1)^2}{(n-2)(n-3)}
\sum_{i > j} (\nu_i - \nu_j)^2 \nonumber \\
& = & \ldots \,\,\,\, =  \,\, \frac{1}{4} n^2 (n-1) (\theta_2 - \theta_1)^2
\nonumber \\ & = & \,\,
\frac{1}{a_n^2} \, n^2 (n-1) (a_{n-1}^2 - a_n \, a_{n-2}) = \frac{1}{a_n^2} \,n^2 (n -1) A_{n-1},
\e
given that the derivative of order $(n-2)$ of the polynomial (\ref{p}) is $(n!/2!) (a_n x^2 + 2 a_{n-1} x + a_{n - 2})$ and, hence, $(\theta_1 - \theta_2)^2$ is $1/a_n^2$ times its discriminant, that is $(\theta_1 - \theta_2)^2 = (4/a_n^2) \, (a_{n-1}^2 - a_n \, a_{n-2}) = (4/a_n^2) A_{n-1}$.

Therefore, the quadratic element $A_{n-1} = a_{n-1}^2 - a_n \, a_{n-2}$ is equal to (modulo positive multiplicative factors) the sums of the squares of the differences of the roots of all derivatives of the polynomial from order 0 (the polynomial itself) to order $n-2$ --- the quadratic polynomial $(n!/2!) (a_n x^2 + 2 a_{n-1} x + a_{n - 2})$; all these sums have the same sign
as the quadratic element $A_{n-1}$:
\b
\label{aa}
A_{n-1} & = & \frac{a_n^2}{n^2 (n-1)} \sum_{i > j} (\lambda_i - \lambda_j)^2 \,\, = \,\, \frac{a_n^2}{(n-1)^2 (n-2)} \sum_{i > j} (\mu_i - \mu_j)^2
\nonumber \\
& = & \frac{a_n^2}{(n-2)^2 (n-3)} \sum_{i > j} (\nu_i - \nu_j)^2 = \,\, \ldots \,\, = \frac{a_n^2}{4} \, (\theta_2 - \theta_1)^2.
\e

If all of the roots of the polynomial are real, then all these sums will be positive, including $A_{n-1}$. On the other hand, positivity of $A_{n-1}$ and, hence, positivity of all these sums, does not necessarily mean that the roots of the polynomial are real: should one or more pair of complex roots be present, then the sum of the squares of the differences of the roots could be negative or positive. However, if in the sequence (\ref{aa}), one of the sums is negative, then all others are negative too, together with $A_{n-1}$. And vice versa. For example, if $A_{n-1}$ is positive, then the quadratic polynomial $(d^{n-2}/dx^{n-2}) \, p(x)$ has two real roots $\theta_{1,2}$. Hence, the roots of the cubic polynomial $(d^{n-3}/dx^{n-3})  \,\, p(x)$ may be all real or there may be a complex pair, however, the sum of the squares of their differences will also be positive. This applies to all earlier derivatives of the polynomial.

If one considers again the reciprocal polynomial (\ref{pd}), following the same arguments, a similar relationship can be established between $A_{m}$ and the sums of the squares of the differences of the roots of all derivatives of the reciprocal polynomial of order $m-2$ to order $0$. The roots of the reciprocal polynomial are $1/\lambda_i$ ($i = 1, 2, \ldots n$). Let $q_i \,\, (i = 1, 2, \ldots , n-1), \,\,$ $r_i \,\, (i = 1, 2, \ldots , n-2), \,\, \ldots \, , s_{1,2}$ be the roots of its first, second, $\ldots, \,\, (n-2)^{\mbox{\scriptsize th}}$ derivative, respectively. In a similar manner, one gets:
\b
\label{aaa}
A_{1} & = & \frac{a_0^2}{n^2 (n-1)} \sum_{i > j} \left( \frac{1}{\lambda_i} - \frac{1}{\lambda_j} \right)^2 \,\, = \,\, \frac{a_0^2}{(n-1)^2 (n-2)} \sum_{i > j} (q_i - q_j)^2
\nonumber \\
& = & \frac{a_0^2}{(n-2)^2 (n-3)} \sum_{i > j} (r_i - r_j)^2 = \,\, \ldots \,\, = \frac{a_0^2}{4} \, (s_2 - s_1)^2.
\e
Using (\ref{aa}) and (\ref{aaa}), one can relate the two quadratic elements through the roots of the polynomial or the roots of its derivatives:
\b
\frac{A_1}{A_{n-1}} \,\,\, = \,\,\, \frac{a_0^2}{a_n^2} \,\,\, \frac{\sum\limits_{i > j} \left(\frac{1}{\lambda_i} - \frac{1}{\lambda_j} \right)^2}{\sum\limits_{i > j} (\lambda_i - \lambda_j)^2}.
\e

Likewise, considering $p'_n(x)$ and its reciprocal polynomial (whose roots are $1/\mu_i, \,\, i = 1, 2, \ldots, n-1$), one can relate $A_2$ with $A_{n-1}$:
\b
\frac{A_2}{A_{n-1}} \,\,\, = \,\,\, \frac{a_1^2}{a_n^2} \,\,\, \frac{\sum\limits_{i > j} \left(\frac{1}{\mu_i} - \frac{1}{\mu_j} \right)^2}{\sum\limits_{i > j} (\mu_i - \mu_j)^2}.
\e
From $p''_n(x)$ and its reciprocal polynomial (whose roots are $1/\nu_i \,\, i = 1, 2, \ldots, n-2$), it is straightforward to obtain another relation:
\b
\frac{A_3}{A_{n-1}} \,\,\, = \,\,\, \frac{a_2^2}{a_n^2} \,\,\, \frac{\sum\limits_{i > j} \left(\frac{1}{\nu_i} - \frac{1}{\nu_j} \right)^2}{\sum\limits_{i > j} (\nu_i - \nu_j)^2}
\e
and so forth. Hence:
\b
\frac{A_1}{A_2} =  \frac{a_0^2}{a_1^2} \, \frac{\sum\limits_{i > j} \left(\frac{1}{\lambda_i} - \frac{1}{\lambda_j} \right)^2}{\sum\limits_{i > j} \left(\frac{1}{\mu_i} - \frac{1}{\mu_j} \right)^2}, \quad
\frac{A_1}{A_3} = \frac{a_0^2}{a_2^2} \, \frac{\sum\limits_{i > j} \left(\frac{1}{\lambda_i} - \frac{1}{\lambda_j} \right)^2}{\sum\limits_{i > j} \left(\frac{1}{\nu_i} - \frac{1}{\nu_j} \right)^2}, \quad
\frac{A_2}{A_3} = \frac{a_1^2}{a_2^2} \, \frac{\sum\limits_{i > j} \left(\frac{1}{\mu_i} - \frac{1}{\mu_j} \right)^2}{\sum\limits_{i > j} \left(\frac{1}{\nu_i} - \frac{1}{\nu_j} \right)^2}, \ldots
\e

\section{The Cubic Sectors of a Real Polynomial and the Falsely Positive Quadratic Elements}

\subsection{Rosset's Analysis of the Discriminants of Cubic Sectors of a Polynomial}

Instead of considering Newton's {\it quadratic inequalities}, Rosset \cite{rosset} introduces a set of {\it cubic inequalities} in the following manner. Let $x_1, x_2, \ldots, x_n$, with $n \ge 3$, be real numbers; $e_1 = \sum_{i=1} ^n x_i, \,\, e_2 = \sum_{i < j} x_i x_j, \,\, \ldots, e_n = \prod_{i=1}^n x_i$ be the elementary symmetric functions; and $E_m = {n \choose m}^{-1} e_m$ --- the normalized elementary symmetric functions. The polynomial (\ref{p}) in this notation is given by \cite{rosset}:
\b
p(x) = \sum_{k = 0}^n (-1)^k {n \choose k} E_k(x_1, \ldots, x_n) x^{n-k}.
\e
The {\it cubic inequalities} proposed by Rosset are given and proven in:
\begin{theorem}[Rosset \cite{rosset}]
If $n \ge 3$ and $x_1, \ldots , x_n$ are real numbers then for $m = 0, \ldots , n-3$
\b
\label{ross}
6 E_m E_{m+1} E_{m+2} E_{m+3} - 4 E_m E_{m+2}^3 - E_m^2 E_{m+3}^2 - 4 E_{m+1}^3 E_{m+3} + 3 E_{m+1}^2 E_{m+2}^2 \ge 0,
\e
\end{theorem}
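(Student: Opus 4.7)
The plan is to recognise the left-hand side of (\ref{ross}) as a positive scalar multiple of the discriminant of a specific cubic polynomial whose coefficients are $E_m,E_{m+1},E_{m+2},E_{m+3}$, and then to exhibit that cubic as arising from $p(x)=\prod_{i=1}^n(x-x_i)$ via a sequence of operations that preserve real-rootedness. A cubic with only real zeros has non-negative discriminant, which is exactly the inequality (\ref{ross}).

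First, I would verify by a direct computation with the standard discriminant formula for $\alpha x^3+\beta x^2+\gamma x+\delta$ that
\[
 \mathrm{disc}\bigl(E_{m+3}\,x^3-3E_{m+2}\,x^2+3E_{m+1}\,x-E_m\bigr)
 = 27\bigl[6 E_m E_{m+1} E_{m+2} E_{m+3} - 4 E_m E_{m+2}^3 - E_m^2 E_{m+3}^2 - 4 E_{m+1}^3 E_{m+3} + 3 E_{m+1}^2 E_{m+2}^2\bigr],
\]
so that it suffices to prove the bracketed cubic has only real zeros.

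Next, I would construct this cubic intrinsically from $p(x)=\sum_{k=0}^n(-1)^k\binom{n}{k}E_k\,x^{n-k}$. Differentiating $n-m-3$ times gives a polynomial of degree $m+3$ whose roots are all real (Rolle applied iteratively). Forming its reciprocal, $r(x)=x^{m+3}\,p^{(n-m-3)}(1/x)$, also yields a real-rooted polynomial, since reversing coefficients replaces each nonzero root $\rho$ by $1/\rho$. Differentiating $r$ exactly $m$ more times leaves a cubic whose coefficient of $x^j$ (for $j=0,1,2,3$) is
\[
(-1)^{m+j}\binom{n}{m+j}\frac{(n-m-j)!}{(3-j)!}\,\frac{(m+j)!}{j!}\,E_{m+j}
=(-1)^{m+j}\,\frac{n!}{6}\binom{3}{j}E_{m+j},
\]
so, up to the nonzero factor $(-1)^m n!/6$, the resulting cubic is precisely $E_m-3E_{m+1}x+3E_{m+2}x^2-E_{m+3}x^3$. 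This cubic is real-rooted (by Rolle applied to $r$), so its discriminant, which is unchanged under sign-reversal and scales by a positive factor under multiplicative scaling, is non-negative. Combined with the algebraic identity of Step 1, this yields (\ref{ross}).

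The main obstacle is the handling of degenerate cases. If $E_{m+3}=0$, the reciprocation of $p^{(n-m-3)}$ drops in degree, and the ``cubic'' $Q(x)$ reduces to a polynomial of degree at most two, so the reasoning ``cubic with real roots has non-negative discriminant'' no longer directly applies. The cleanest remedy is a perturbation argument: replace $x_i$ by $x_i+\varepsilon$ for a generic small $\varepsilon$, so that every $E_k$ becomes nonzero and all intermediate polynomials are of full degree, apply the argument above to conclude (\ref{ross}) for the perturbed values, and then let $\varepsilon\to 0$. Since the left-hand side of (\ref{ross}) is a polynomial in $x_1,\ldots,x_n$, the non-strict inequality is preserved in the limit. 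The same device handles the possibility that $p^{(n-m-3)}$ itself has zero as a root. Apart from this continuity step, the argument is essentially a coefficient bookkeeping plus the classical fact that Rolle's theorem and reciprocation preserve real-rootedness.
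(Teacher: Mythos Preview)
Your proposal is correct and follows the same strategy the paper outlines: identify the left-hand side of (\ref{ross}) as $1/27$ of the discriminant of the cubic sector with consecutive coefficients $E_m,\ldots,E_{m+3}$, and obtain that cubic from $p(x)$ by a chain of differentiations and passages to reciprocal polynomials that preserves real-rootedness (the paper cites Rosset for the theorem and sketches precisely this argument in Sections~5.1 and~5.3). Your explicit coefficient bookkeeping and the perturbation step for the degenerate case $E_{m+3}=0$ fill in details the paper leaves implicit.
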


The left-hand sides of (\ref{ross}) are nothing else but ($1/27$ of) the discriminants of every cubic polynomial
\b
\label{ro}
E_m x^3 - 3 E_{m + 1} x^2 + 3 E_{m + 2} x - E_{m + 3}
\e
 whose coefficients are four consecutive coefficients (in binomial form) of the original polynomial.

For the purposes of this work, introduced is the following
\begin{definition}
The cubic polynomials which can be obtained from the original polynomial $p(x)$ by taking in binomial form four consecutive coefficients from it are called \linebreak \underline{cubic sectors} of the polynomial.
\end{definition}

As will be shown further in this section (in the final sub-section), each of these cubic polynomials can be obtained from the original polynomial by a suitable sequence of differentiations and passages to reciprocal polynomials (and, in relation to this, one should note that, as the polynomial (\ref{p}) is in binomial form, all of its derivatives, alongside all of their reciprocal polynomials, are also in binomial form --- as is (\ref{ro}), --- modulo overall multiplicative factors).

Contrasting Rosset's set-up \cite{rosset} with Newton's \cite{newton}, one immediately sees that the necessary condition for reality of all of the roots $x_1, \ldots , x_n$ of $p(x)$ in the former set-up is having non-negative discriminants of all cubic sectors of the polynomial, while the necessary condition for reality of the roots in the latter is having non-negative discriminants of all {\it quadratic sectors} of the original polynomial. However, Rosset points out \cite{rosset} that, if $E_0 = 1, E_1, \ldots , E_n$ are real numbers satisfying the cubic inequalities (\ref{ross}) for $m = 0,1, \ldots, n - 3$, then they also satisfy Newton's quadratic inequalities $E_m^2 > E_{m-1} E_{m+1}$, with $m = 1, 2, \ldots, n-1$, while the converse is not true and, hence, Rosset argues \cite{rosset} that the cubic inequalities are stronger than Newton's quadratic inequalities.

On the other hand, Rosset does not offer a tool for the determination of an upper bound of the number of possible real roots of a polynomial: Rosset's cubic inequalities are only used in the context of a necessary condition for the reality of all of the roots of the polynomial (and this necessary condition is more restrictive than Newton's).

\subsection{The Cubic Polynomial and its Quadratic Elements --- New Criterion for the Existence of Three Real Roots}
In this work, the cubic sectors of the polynomial will again be studied, but with the goal of proposing a {\it modification of Newton's Rules of Signs}, allowing the determination of a {\it stricter} upper bound on the number of possible real roots, --- as one would expect that, performing the analysis on the level of the cubic sectors of a polynomial, instead of the quadratic ones, gives a clearer picture of what type of roots are associated with the {\it two neighbouring quadratic elements} of the cubic polynomial.

The quadratic elements of the general real cubic polynomial with coefficients in binomial form,
\b
\label{c}
c(x) = a_3 x^3 + 3 a_2 x^2 + 3 a_1 x + a_0,
\e
are $A_0 = a_0^2, \,\, A_1 = a_1^2 - a_0 a_2, \,\, A_2 = a_2^2 - a_1 a_3$ and $A_3 = a_3^2$. For the purposes of this work, the quadratic elements $A_0$ and $A_3$ of this cubic polynomial are ignored. Clearly, the remaining $A_1$ and $A_2$ involve three consecutive of the four coefficients of the cubic.
\begin{definition}
For a given cubic sector, the \underline{adjacent coefficient} of a quadratic element is this coefficient of the cubic which is not involved in the determination of the corresponding quadratic element.
\end{definition}

Thus, for the cubic (\ref{c}), the {\it adjacent coefficient} of the quadratic element $A_1 = a_1^2 - a_0 a_2$ is $a_3$ and the {\it adjacent coefficient} of the quadratic element $A_2 = a_2^2 - a_1 a_3$ is $a_0$.

\begin{theorem}[see also \cite{27, 29}]
The cubic polynomial $c(x) = a_3 x^3 + 3 a_2 x^2 + 3 a_1 x + a_0 $ has three real roots if, and only if, the quadratic element $A_2 = a_2^2 - a_1 a_3$ is non-negative and the adjacent coefficient $a_0$ lies in the interval $[a^{(0)}_2, a^{(0)}_1]$, with $a^{(0)}_{1,2}$ given by
\b
\label{c12}
a^{(0)}_{1,2} = \frac{- 3 a_2 A_2 + a_2^3 \pm 2 A_2^{3/2}}{a_3^2}.
\e
\end{theorem}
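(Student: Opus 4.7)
The plan is to translate the condition ``three real roots'' into non-negativity of the cubic's discriminant, then view that discriminant as a quadratic polynomial in the adjacent coefficient $a_0$, and finally simplify to read off the interval.

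First, I would invoke the standard fact that a real cubic has three real roots (counted with multiplicity) if and only if its discriminant is non-negative. Applying the general formula $18 A B C D - 4 B^3 D + B^2 C^2 - 4 A C^3 - 27 A^2 D^2$ with $(A,B,C,D) = (a_3, 3 a_2, 3 a_1, a_0)$ and collecting in powers of $a_0$ yields, up to the positive factor $27$, the quadratic
\b
D(a_0) \,=\, - a_3^2\, a_0^2 \,+\, 2 a_2 (3 a_1 a_3 - 2 a_2^2)\, a_0 \,+\, (3 a_2^2 a_1^2 - 4 a_3 a_1^3).
\e
Since $a_3 \neq 0$ (the cubic is genuine), $D(a_0)$ is a downward-opening parabola in $a_0$, so $D(a_0) \ge 0$ is equivalent to $a_0$ lying in the closed interval between its two real zeros, provided those zeros exist.

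Second, I would apply the quadratic formula to $D(a_0)$, reducing the problem to simplifying the inner discriminant
\b
\Delta_{\mathrm{in}} \,=\, a_2^2 (3 a_1 a_3 - 2 a_2^2)^2 \,+\, a_3^2 (3 a_2^2 a_1^2 - 4 a_3 a_1^3).
\e
The crucial step is the substitution $a_1 a_3 = a_2^2 - A_2$, which turns $3 a_1 a_3 - 2 a_2^2$ into $a_2^2 - 3 A_2$. Then the first summand becomes $a_2^2 (a_2^2 - 3 A_2)^2 = a_2^6 - 6 a_2^4 A_2 + 9 a_2^2 A_2^2$, while the second, rewritten as $(a_1 a_3)^2 (3 a_2^2 - 4 a_1 a_3) = (a_2^2 - A_2)^2 (-a_2^2 + 4 A_2)$, expands to $-a_2^6 + 6 a_2^4 A_2 - 9 a_2^2 A_2^2 + 4 A_2^3$. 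The $a_2^6$, $a_2^4 A_2$, and $a_2^2 A_2^2$ contributions cancel pairwise, leaving the remarkably clean identity $\Delta_{\mathrm{in}} = 4 A_2^3$.

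Third, with $\Delta_{\mathrm{in}} = 4 A_2^3$, the parabola $D(a_0)$ has real zeros if and only if $A_2 \ge 0$; when $A_2 < 0$ it is strictly negative and $c(x)$ has a single real root and a complex-conjugate pair, ruling out three real roots. When $A_2 \ge 0$, the zeros of $D(a_0)$ are
\b
a_0 \,=\, \frac{a_2(a_2^2 - 3 A_2) \pm 2 A_2^{3/2}}{a_3^2} \,=\, \frac{- 3 a_2 A_2 + a_2^3 \pm 2 A_2^{3/2}}{a_3^2},
\e
which are precisely the $a_{1,2}^{(0)}$ of the statement, with $a_2^{(0)} \le a_1^{(0)}$. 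Because $D(a_0)$ opens downwards, $D(a_0) \ge 0$ holds exactly for $a_0 \in [a_2^{(0)}, a_1^{(0)}]$, establishing both directions of the equivalence.

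The only real obstacle is computational: the cancellation in $\Delta_{\mathrm{in}}$. It is not deep, but the three sextic-in-coefficients terms must be carefully tracked; the substitution $a_1 a_3 = a_2^2 - A_2$ is what reveals the hidden factorisation and collapses everything to $4 A_2^3$, which is the pivotal simplification making the whole result transparent.
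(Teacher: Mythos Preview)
Your argument is correct and follows exactly the paper's route: write the cubic's discriminant as a downward-opening quadratic in $a_0$, compute that quadratic's discriminant, observe it equals $16A_2^3$ (your $4\Delta_{\rm in}$), and read off the interval via the quadratic formula. The only difference is that you spell out the algebra behind the collapse to $A_2^3$ via the substitution $a_1 a_3 = a_2^2 - A_2$, whereas the paper simply states $\Delta_2 = 16A_2^3$ without showing the cancellation.
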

\begin{proof}
The discriminant of this cubic polynomial is
\b
\Delta_3 = 27 [- a_3^2 a_0^2 + 2 a_2 (3 a_1 a_3 - 2 a_2^2) a_0 + 3 a_1^2 a_2^2 - 4 a_1^3 a_3].
\e
It is quadratic in the free term $a_0$ and the discriminant of this quadratic is
\b
\Delta_2 = 16 (a_2^2 - a_1 a_3)^3 = 16 A_2^3.
\e

If the quadratic element $A_2$ is negative, one has $\Delta_2 < 0$. Hence, $\Delta_3 < 0$ for all $a_0$ and therefore the cubic polynomial $c(x)$ with $A_2 < 0$ has only one real root (and a pair of complex conjugate roots).

If the quadratic element $A_2$ is non-negative, then the cubic polynomial $c(x)$ has three real roots, provided that $a_0 \in [a^{(0)}_2, a^{(0)}_1]$, where $a^{(0)}_{1,2}$ are the roots of the quadratic in $a_0$ equation $\Delta_3 = 0$, that is, they are the roots of
\b
\label{quadratic}
a_3^2 a_0^2 - 2 a_2 (3 a_1 a_3 - 2 a_2^2) a_0 - 3 a_1^2 a_2^2 + 4 a_1^3 a_3 = 0.
\e
Namely, the ones given by (\ref{c12}).
\end{proof}

If $a_0 \notin [a^{(0)}_2, a^{(0)}_1]$, then the cubic polynomial $c(x)$ will have one real root and a pair of complex conjugate roots.

Hence, one has a {\it new criterion for the reality of the roots of the cubic polynomial}. It is a condition on the coefficients of the polynomial, rather than the sign of the discriminant: {\it the cubic polynomial $c(x) = a_3 x^3 + 3 a_2 x^2 + 3 a_1 x + a_0 $ has three real roots if, and only if, the quadratic element $A_2 = a_2^2 - a_1 a_3$ is non-negative and the adjacent coefficient $a_0$ lies in the interval $[a^{(0)}_2, a^{(0)}_1]$, with $a^{(0)}_{1,2}$ given by} (\ref{c12}).

This new criterion for the reality of the roots of the cubic is, of course, equivalent to the discriminant $\Delta_3$ of the cubic being non-negative if, and only if, three real roots exist.

\begin{definition}
The interval determined by the roots (\ref{c12}) of (\ref{quadratic}) is called \underline{prescribed} \underline{interval} for the adjacent coefficient $a_0$.
\end{definition}

As already mentioned, the cubic sector $c(x)$ has two quadratic elements: $A_1$ with adjacent coefficient $a_3$ and $A_2$ with adjacent coefficient $a_0$ (recall that the quadratic elements at both ends of the cubic, that is, $A_0$ and $A_3$, are ignored). Hence, there are two adjacent coefficients for each cubic sector and each of these adjacent coefficients has its own prescribed interval.

If one considers instead the reciprocal cubic polynomial $c^\dagger(x) = x^3 c(1/x) = a_0 x^3 + 3 a_1 x^2 + 3 a_2 x + a_3$ and applies the same arguments to it, three real roots will exist if, and only if, the quadratic element $A_1 = a_1^2 - a_0 a_2$ is non-negative and its adjacent coefficient $a_3$ lies in its prescribed interval $[a^{(3)}_2, a^{(3)}_1]$, with $a^{(3)}_{1,2}$ given by
\b
a^{(3)}_{1,2} = \frac{- 3 a_1 A_1 + a_1^3 \pm 2 A_1^{3/2}}{a_0^2}.
\e

If $c(x)$ has three real roots, $c^\dagger(x)$ will also have three real roots (and vice versa) --- as the roots of reciprocal polynomial are themselves reciprocal. Hence, $A_1 \ge 0$ and $a_3 \in [a^{(3)}_2, a^{(3)}_1]$ is equivalent to $A_2 \ge 0$ and $a_0 \in [a^{(0)}_2, a^{(0)}_1]$: if either of these holds, the other one holds too and the cubic (and, of course, its reciprocal) will have three real roots.

If both quadratic elements are negative, the cubic will have one real root, as already shown.

However, negativity of either of the two quadratic elements of the cubic does not mean that the other quadratic element is also negative. Clearly, the negative quadratic element leads straight away to a situation with one real root and a pair of complex conjugate roots. Hence, the reciprocal cubic should be with the same root structure and, if the associated quadratic element is non-negative, then the adjacent coefficient must necessarily lie outside of its prescribed interval. Otherwise, the reciprocal cubic will have three real roots and this is, obviously, impossible.

Hence, the possible situations for the quadratic elements of the cubic and their adjacent coefficients can be summarised as follows (see also Figure 2a and Figure 2b).
\begin{itemize}
\item[(i)] Both quadratic elements positive and both adjacent coefficients lying in their respective prescribed intervals, resulting in the cubic having three real roots;

\item[(ii)] Both quadratic elements negative (the respective intervals for the adjacent coefficients are not over the reals in such case) and the cubic having one real root and a pair of complex conjugate roots;

\item[(iii)] One quadratic element negative and the other quadratic element positive with the adjacent coefficient of the latter not in its relevant prescribed interval, yielding a cubic also with one real root and a pair of complex conjugate roots;

\item[(iv)] Both quadratic elements positive with both adjacent coefficients not lying in their respective prescribed intervals --- again, a cubic with one real root and a pair of complex-conjugate roots.

The question of what happens in the case of a vanishing quadratic element should also be addressed. If, for the cubic polynomial $c(x)$, it happens so that $A_2 = 0$, then, if also $a_0 = a_2^3/a_3^2$, see (\ref{c12}), the cubic polynomial will have a triple real root equal to $-a_2/a_3$; for any other value of $a_0$, the cubic polynomial will have one real root and a pair of complex conjugate roots, see also \cite{29}. Hence, if the cubic polynomial is an element of the cubic sectors of a polynomial of degree 4 or higher, then the higher-degree polynomial will have a reduced number of stationary points (not counted with their individual multiplicities) and, thus, a vanishing quadratic element of the cubic associates with complex roots up in the hierarchy of polynomials of higher degrees.

\end{itemize}

\begin{center}
\begin{tabular}{cc}
\includegraphics[width=68mm, height=58mm]{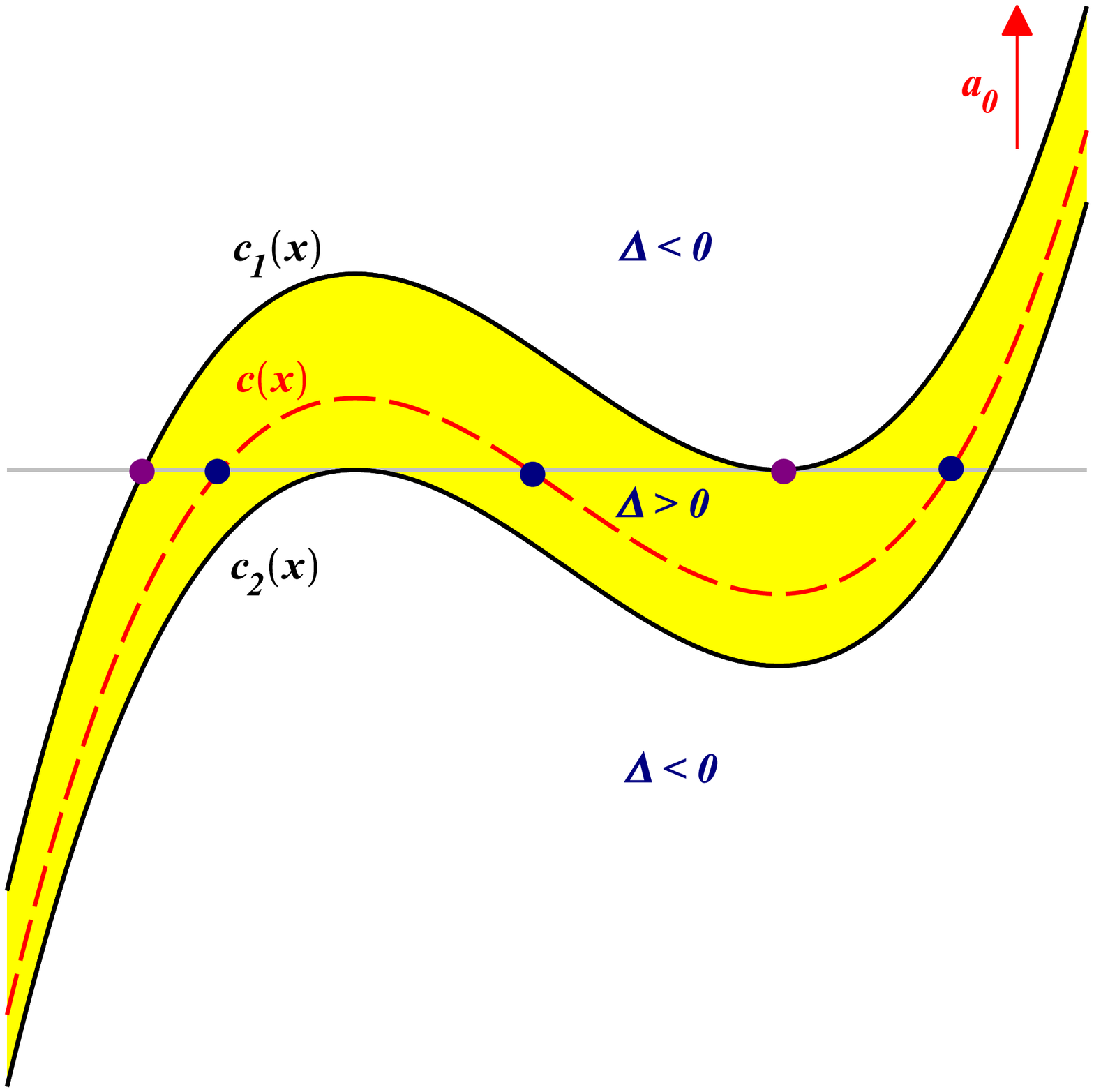} & \includegraphics[width=68mm, height=58mm]{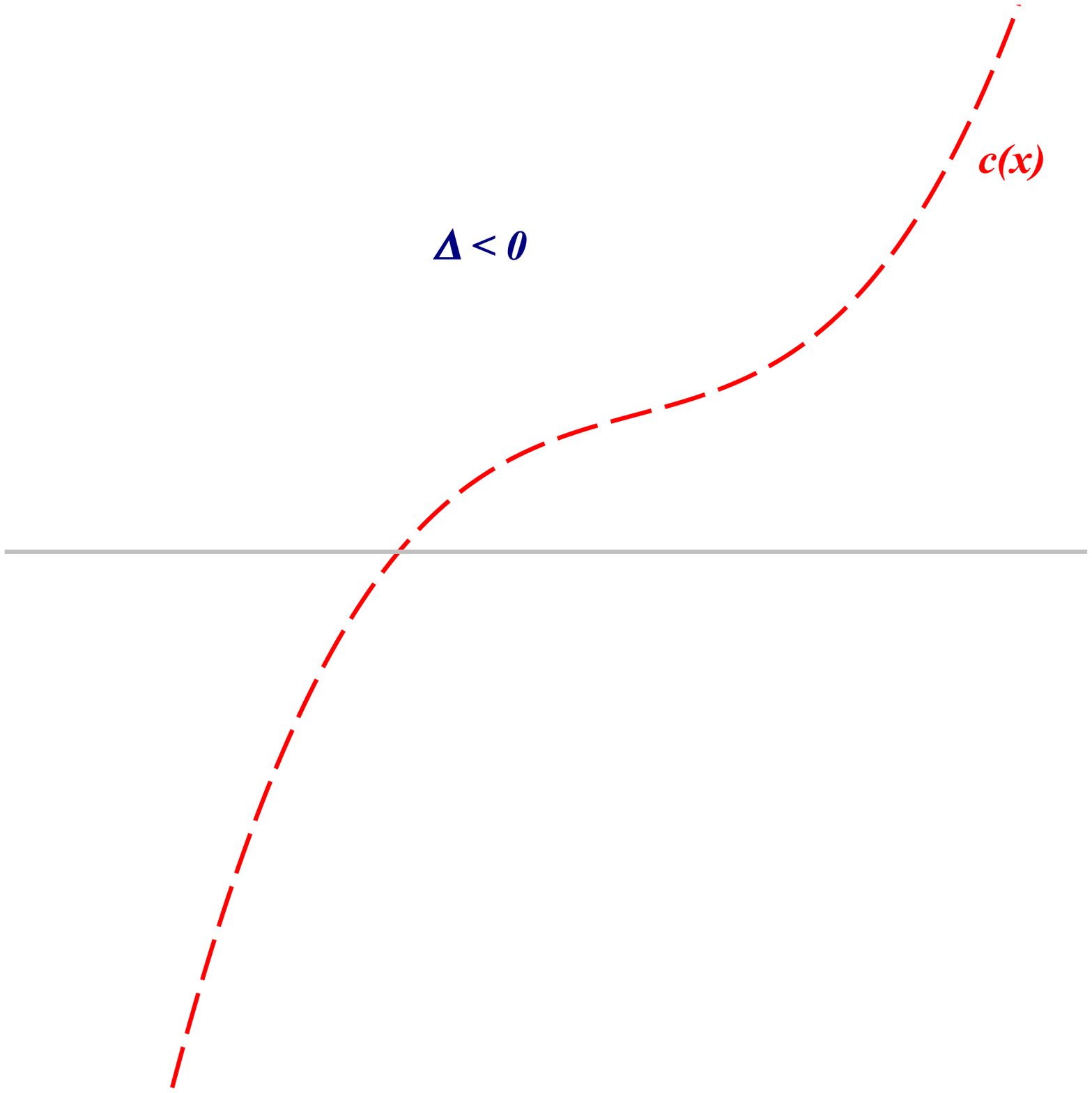}  \\
{\scriptsize {\bf Figure 2a}} &  {\scriptsize {\bf Figure 2b}} \\
\multicolumn{1}{c}
{\begin{minipage}{18em}
\scriptsize
The quadratic element $A_2$ of the cubic $a_3 x^3 + 3 a_2 x^2 + 3 a_1 x + a_0$ is positive and the variation of adjacent coefficient (the free term) $a_0$ results in variation of the sign of the cubic discriminant. When $a_0 = a^{(0)}_{1,2}$, the cubic $c_{1,2}(x)$ is obtained and the cubic discriminant is zero in this case. All cubics in the highlighted band between $c_1(x)$ and $c_2(x)$ (as the dashed curve), that is, all cubics with positive quadratic element $A_2$ and $a_0 \in [a^{(0)}_2, a^{(0)}_1]$, have positive discriminants and three real roots. All other cubics (outside the highlighted band) have negative discriminant and one real root together with a pair of complex conjugate roots. \\
Plotted here as illustration is the polynomial $x^3 - 5x^2 - x + 10$ (dashed curve) and the two {\it privileged} polynomials $c_{1,2}(x)$ with free terms $21.901$ and $-0.049$, respectively. Cubics $x^3 - 5x^2 - x + a_0$ with $a_0 > 21.901$ are in the polynomial band ``above" $c_1(x)$ and have negative discriminant. All of them have one real root and a pair of complex-conjugate roots. In the polynomial band between $c_1(x)$ and $c_2(x)$, that is, for $-0.049 < a_0 < 21.901$, all polynomials have positive discriminant and 3 real roots. Finally, in the polynomial band ``below" $c_2(x)$, namely, for $a_0 < -0.049$, the polynomials again have negative discriminant and one real root and a pair of complex-conjugate roots.
\end{minipage}}
&
\multicolumn{1}{c}
{\begin{minipage}{18em}
\scriptsize
\vskip-3.20cm
This is the cubic $a_3 x^3 + 3 a_2 x^2 + 3 a_1 x + a_0$ with negative quadratic element $A_2$. The discriminant of the cubic is also negative. The derivative of the cubic, that is, the quadratic polynomial $a_3 x^2 + 2 a_2 x + a_1$, has negative discriminant too and thus, it has a pair of complex conjugate roots. Namely, the cubic has no stationary points and, hence, there are no {\it privileged} polynomials associated with it. In result, now there is only one polynomial band --- all polynomials have negative discriminant --- whatever value $a_0$ takes. All these cubics have one real root and a pair of complex conjugate roots. \\
Plotted as illustration is the polynomial $x^3 - x^2 + x + 1$. Cubics $x^3 - x^2 + x + a_0$, with any $a_0$, have negative discriminants. All these cubics have one real root and a pair of complex-conjugate roots.
\end{minipage}} \\
& \\
\end{tabular}
\end{center}

Returning to the positive quadratic elements, it has already been shown that a positive quadratic element, considered in its own right within a cubic, does not necessarily associate with real roots: one has to also consider its adjacent coefficient. For a polynomial $p(x)$ of degree $n \ge 4$, there are $n-2$ ``overlapping" cubic sectors, each associated with two quadratic elements (a cubic polynomial coincides with its only cubic sector). All quadratic elements, except the ones at either ``end" of the polynomial $p(x)$, namely, $A_1 = a_1^2 - a_0 a_2$ and $A_{n-1} = a_{n-1}^2 - a_n a_{n-2}$, are shared between neighbouring cubic sectors. For example, $A_2 = a_2^2 - a_1 a_3$ is, on the one hand, one of the two quadratic elements of the cubic sector $a_3 x^3 + 3 a_2 x^2 + 3 a_1 x + a_0$ (the other one being $A_1 = a_1^2 - a_0 a_2$), with adjacent coefficient $a_0$, while, on the other hand, $A_2$ is also one of the two quadratic elements of the neighbouring cubic sector $a_4 x^3 + 3 a_3 x^2 + 3 a_2 x + a_1$ (the other one being $A_3 = a_3^2 - a_2 a_4$), with adjacent coefficient $a_4$. That is, $A_2$ is ``shared" between these two neighbouring cubic sectors. Note that each of the shared quadratic elements associates with two different adjacent coefficients --- one on either ``side" of it, that is, one from each of the two neighbouring cubic sectors which share the quadratic element. This is exactly what will be considered next: the ``resolution" of a polynomial of degree $n$ into its cubic sectors and investigating whether the roots of these cubic sectors are all real or not.

\subsection{The Cubic Sectors of a Polynomial}

The cubic sectors of a polynomial are obtained by taking four consecutive coefficients and using them to write cubic polynomials with coefficients also in binomial form. This is achieved by a suitable sequence of differentiations and passages to reciprocal polynomials. Differentiating $p(x)$ $n-3$ times yields the cubic sector $c_{n-3}(x) = (n!/3!) (a_n x^3 + 3 a_{n-1} x^2 + 3 a_{n-2} x + a_{n-3})$. Differentiating the reciprocal polynomial $p^\dagger(x) = {n \choose n} a_0 x^n + {n \choose n-1} a_{1} x^{n-1} + \ldots + {n \choose 0} a_n$ gives ${n \choose n} n a_{0} x^{n-1} + {n \choose n-1}(n-1) a_{1} x^{n-2} + {n \choose n-2} (n-2) a_{2} x^{n-3} + \ldots + {n \choose 1} a_{n-1} = n [{n-1 \choose n-1} a_{0} x^{n-1} + {n-1 \choose n-2} a_{1} x^{n-2} + {n-1 \choose n-3} a_{2} x^{n-3} + \ldots + {n-1  \choose 0} a_{n-1}]$. Taking the reciprocal polynomial of this derivative, gives the polynomial of degree $n-1$, $\,\,  n [{n-1  \choose 0} a_{n-1} x^{n-1} + {n-1 \choose 1} a_{n-2} x^{n-2} + \ldots +{n-1 \choose n-1} a_0]$, and differentiating it $n-4$ times yields the cubic sector $c_{n-4}(x) = (n!/3!) (a_{n-1} x^3 + 3 a_{n-2} x^2 + 3 a_{n-3} x + a_{n-4})$. Descending down the ladder to a polynomial of degree $n-2$, gives the next cubic sector: $c_{n-5}(x) = (n!/3!)(a_{n-2} x^3 + 3 a_{n-3} x^2 + 3 a_{n-4} x + a_{n-5})$. The two ``bottom rungs" are clearly $(n!/4!) [a_4 x^4 + 4 a_3 x^3 + 6 a_2 x^2 + 4 a_1 x + a_0]$, from which, after differentiation, one gets the second last cubic sector $c_{1}(x) = (n!/3!)(a_{4} x^3 + 3 a_{3} x^2 + 3 a_{2} x + a_{1})$, and $(n!/3!) [a_3 x^3 + 3 a_2 x^2 + 3 a_1 x + a_0]$ which, itself, is the last cubic sector $c_0(x)$.

As already discussed, if one of the two quadratic elements of a cubic sector is positive and the adjacent coefficient from this cubic sector lies in its prescribed interval, the cubic sector will have three real roots. Hence, the other quadratic element of the same cubic sector must also be positive and its adjacent coefficient from this cubic sector must also lie in its relevant prescribed interval. On the other hand, if one of the two quadratic elements of a cubic sector is positive, but the adjacent coefficient from this cubic sector is not in its relevant prescribed interval, the cubic sector will have only one real root and, hence, the other quadratic element of this cubic sector will be either negative or will be positive and its adjacent coefficient from this cubic sector will also be outside its prescribed interval. Due to the overlap of the cubic sectors, negativity of a quadratic element in one cubic sector ``spills over" into the neighbouring cubic sector by the shared quadratic element in the following manner: a negative quadratic element in a cubic sector leads to either a negative quadratic element in the cubic sector adjacent to the terms of this quadratic element, or to a positive one for which the associated adjacent coefficient does not lie in its prescribed relevant interval. If the ``shared" quadratic element is also negative, then the neighbouring cubic sector will also have one real root only. However, if the ``shared" quadratic element is positive, then, as it associates with a different adjacent coefficient in the neighbouring cubic sector, this different adjacent coefficient in the neighbouring cubic sector may lie or may not lie in its relevant prescribed interval.

Hence, if there is a negative quadratic element, then its neighbouring quadratic element will either be also negative or will be non-negative and its adjacent coefficient will not lie in its relevant prescribed interval. In other words, a positive quadratic element with its adjacent coefficient in its relevant interval can only be a neighbour to a positive quadratic element and can not be a neighbour to a negative quadratic element.

\begin{definition}
A positive quadratic element, for which one or both of its adjacent coefficients do not lie in the relevant prescribed intervals is called a \underline{falsely positive quadratic} \underline{element}.
\end{definition}
A falsely positive quadratic element is clearly associated with complex roots. The change of the sign of some of these, under certain conditions, results in a modification of Newton's Rules.

\section{Modification of Newton’s Rules}

The arguments put forward so far allow one to propose a modification of Newton's Incomplete and Complete Rules. This involves changing the signs of some of the {\it falsely positive quadratic elements}. This should be done under strict conditions and as follows.

One or more falsely positive quadratic element can be neighboured by negative quadratic elements on both sides. In this case, the sign of the falsely positive quadratic element(s) should not be changed as, by doing so, the neighbouring negative groups would be merged and, in result, more permanencies in the sequence of quadratic elements would ensue and thus, the minimum number of complex roots would decrease, rather than increase, while the number of real roots would increase. The Newton rules in this case are not modified.

When one or more falsely positive quadratic elements are neighboured by a positive quadratic element on one side and a negative quadratic element on the other, changing the sign of the falsely positive quadratic element(s) would form a negative group with the neighbouring negative quadratic element or enlarge the existing neighbouring negative group. There would be no change in the minimum number of complex roots, under Newton's incomplete rule, as advocated by Sylvester \cite{syl}. Hence, the sign of the falsely positive quadratic element(s) should also not be changed. Newton's rules will not be modified in this case either.

The proposed minor modification of Newton’s rule is to be applied only when one or more falsely positive quadratic elements are neighboured by positive quadratic elements or by groups of positive quadratic elements. Only in such cases, the signs of these falsely positive quadratic elements should be changed. In result, an existing positive group (of at least three) will be broken by the introduction of a negative quadratic element or a group of negative quadratic elements within it. Hence, the number of permanencies in the sequence of quadratic elements will decrease by two and, with it, the maximum number of real roots will be reduced by two and the minimum number of complex roots will increase by two. This is just another way of saying that falsely positive quadratic elements associate with complex roots, not with real ones. This proposed modification of the Newton Rules can be summarized in the following

\begin{theorem}[Modification of Newton's Rules]
Let $A = \{ A_0 = a_0^2 > 0, \,  A_1 = a_1^2 - a_0 a_2, \, \ldots A_{n-1} = a_{n-1}^2 - a_{n-2}a_n, \, A_n = a_n^2 > 0 \}$ be the sequence of the quadratic elements of the polynomial $p(x)$ as given in binomial from by (\ref{p}).

Let $\widetilde{A}$ be the sequence of the quadratic elements of the polynomial for which the signs in each uninterrupted sub-sequence of one or more falsely positive quadratic elements, which are bounded by positive quadratic elements, have all been changed from positive to negative.

Then the upper bound on the number of real roots of the polynomial is equal to the one obtained by the original Newton's rule on the sequence $A$, reduced by twice the number of such uninterrupted sub-sequences of one or more falsely positive quadratic elements, which are bounded by positive quadratic elements, that is, the upper bound on the number of real roots is obtained by applying the original Newton's rule on the sequence $\widetilde{A}$.
\end{theorem}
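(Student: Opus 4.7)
My plan is to verify two things: (i) that every uninterrupted sub-sequence of falsely positive quadratic elements bounded by genuinely positive ones signals at least one pair of complex conjugate roots of $p(x)$, and (ii) that applying Newton's original rule to the modified sequence $\widetilde{A}$ correctly incorporates these pairs, reducing the upper bound by exactly two per flagged sub-sequence.

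I would begin with the cubic-sector justification developed in Section 5. By the new criterion (Theorem on the cubic polynomial), a falsely positive quadratic element $A_m$ lies inside some cubic sector $c_k(x)$ whose associated adjacent coefficient falls outside the prescribed interval. Hence that cubic sector has a negative discriminant and therefore exactly one real root and a pair of complex conjugate roots. Since every cubic sector is obtained from $p(x)$ by a finite sequence of differentiations and passages to reciprocal polynomials (Section 5.3), and since these operations preserve all-real-rootedness \emph{upward}—if $p$ has all real roots, so do all its derivatives and its reciprocal polynomial—the presence of a complex pair in $c_k$ forces at least one pair of complex conjugate roots in $p(x)$. Thus each bounded run of falsely positive elements corresponds to at least one \emph{genuine} complex-conjugate pair of $p(x)$ whose existence is not detected by the unmodified Newton count.

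Next, I would carry out the arithmetic of the sign change. An uninterrupted sub-sequence of falsely positive quadratic elements bounded on both sides by positive neighbours contributes, in the unmodified sequence $A$, only permanencies in the quadratic row. Reversing the signs within the sub-sequence introduces exactly two new variations in the quadratic row—one at each boundary—while leaving the simple-element row unchanged. Each new quadratic-row variation converts an adjacent couple of successions from $pP$ to $pV$ or from $vP$ to $vV$, and these are precisely the couples \emph{not} counted by Newton's Complete Rule in the sum that bounds the real roots. Consequently the upper bound produced by Newton's rule on $\widetilde{A}$ is exactly two smaller per flagged sub-sequence than the bound produced on $A$. By (i), this reduction is supported by genuine complex pairs of $p(x)$, so the modified bound remains a valid upper bound.

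The principal obstacle is part (i) in the case where a single bounded sub-sequence contains several consecutive falsely positive elements: one must show that the reduction of two per sub-sequence, rather than per element, is both justified (not undercounting) and at least not invalid (not overcounting beyond the actual number of complex pairs). Here I would exploit the overlap of neighbouring cubic sectors: consecutive falsely positive elements are shared between overlapping cubic sectors whose common cubic coefficients tie the adjacent-coefficient failures together, so a single complex pair of $p(x)$ can be responsible for the entire run of falsified elements. Since the statement of Theorem 9 only claims the bound obtained from $\widetilde{A}$ is an upper bound (not necessarily tight), showing that \emph{at least} one extra complex pair must accompany each flagged sub-sequence suffices, and this follows from the argument in (i). The remaining checks—that vanishing quadratic elements have been eliminated by a translation (as permitted by the discussion following Theorem 2), and that the end elements $A_0=a_0^2$ and $A_n=a_n^2$ are positive so the boundary condition ``bounded by positive quadratic elements'' is well-defined—are routine.
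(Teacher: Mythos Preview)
Your plan follows the paper's approach closely: the combinatorial count in your part (ii)---that flipping a bounded run of falsely positive signs introduces exactly two new variations in the quadratic row and hence lowers the Newton permanency count by two per run---is precisely what the paper's proof does, and your part (i) makes explicit the cubic-sector justification that the paper leaves to the discussion preceding the theorem. The paper's own proof is in fact terser than yours: it invokes Corollary~3.1 for $\rho\le r$, observes that $\widetilde r=r-2$ per flagged run, and then appeals to Sylvester's negative-group statement for additivity over $\sigma$ runs.

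There is, however, a genuine gap in your (i) when $\sigma>1$. You show that each flagged sub-sequence forces \emph{some} complex-conjugate pair in $p(x)$, but not that distinct sub-sequences force \emph{distinct} pairs, nor that these pairs are disjoint from those already detected by genuinely negative quadratic elements elsewhere in $A$. Concretely: to conclude $\rho\le r-2\sigma$ you need at least $2\sigma$ complex roots \emph{beyond} the $n-r$ already implied by the variations in $A$; your argument as written delivers only $n-\rho\ge 2$. You correctly handle the intra-run case (several consecutive falsely positive elements may share a single obstructing pair, justifying a reduction of $2$ rather than $2k$), but the inter-run independence is never established. The paper's proof glosses over the same point by quoting Sylvester's statement on negative groups---yet that statement concerns \emph{actual} negative quadratic elements, not artificially negated ones---so neither argument closes the gap cleanly. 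A repair would require showing that the cubic sectors attached to separated flagged runs arise at different levels of the differentiation/reciprocal ladder and therefore witness independent complex pairs of $p(x)$.
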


\begin{proof}
Due to Corollary 2.1 (Sylvester, \cite{syl}), the number $\rho$ of real roots is less than or equal to the number $r$ of permanencies in the sequence $A$ of quadratic elements, i.e. $\rho \le r$. Let $\tilde{r}$ be the number of permanencies in the sequence $\widetilde{A}$. Suppose there is only one uninterrupted sub-sequence of one or more falsely positive quadratic elements, which are bounded by positive quadratic elements. Under the original Newton's rule, all the signs in this group are positive. Under the proposed modification of the Newton's Rule, the signs of all falsely positive quadratic elements are changed to negative ones. This introduces a ``negative group" within the uninterrupted sub-sequence of one or more falsely positive quadratic elements. Hence, the number of permanencies in the entire sequence of quadratic elements will be reduced by two: the sequence $\widetilde{A}$ has two less permanencies than the sequence $A$. Therefore, there are at least two fewer real roots than the number which can be determined from the original Newton's method, that is $\rho \le \tilde{r} = r - 2$. Applying this argument to all such uninterrupted sub-sequences of one or more falsely positive quadratic elements and supposing that there are $\sigma$ such groups, the upper bound of the total number of real roots will be $\tilde{r} = r - 2 \sigma$. This ``additivity" property of the different newly-formed negative groups follows from Sylvester's statement \cite{syl}, quoted earlier in Section 3 after Theorem 2: ``{\it The number of imaginary roots of an algebraic function cannot be less than the number of negative groups in the complete series of its quadratic elements”}.
\end{proof}

With the modification of Newton's method, one cannot determine by what amount the possible number of positive roots and the possible number of negative roots will be reduced --- it all depends on the sequence of simple elements, which is unaltered by the proposed modification.

\section{Example and Comparison to Other Methods}

The polynomial $x^8 - 16 x^7 + 28 x^6 +112 x^5 - 70 x^4 + (28/5) x^3 + 28 x^2 + 16 x + 1$ has two positive roots ($4.310$ and $13.290$), two negative roots ($-0.071$ and $-2.147$) and two pairs of complex conjugate roots ($0.620 \pm 0.535 i$ and $-0.310 \pm 0.274 i$). The coefficients of the polynomial in binomial form are $a_8 = 1, \, a_7 = -2, \, a_6 = 1, \, a_5 = 2, \, a_4 = -1, \, a_3 = 1/10, \, a_2 = 1, \, a_1 = 2, $ and $a_0 = 1$. The six cubic sectors of the polynomial are: $c_1(x) = (1/10)x^3 + 3 x^2 + 6 x + 1$ having three real roots,  $c_2(x) = - x^3 + (3/10) x^2 + 3 x + 2$ with only one real root, $c_3(x) = 2 x^3 - 3 x^2 + (3/10) x + 1$ with only one real root, $c_4(x) = x^3 + 6 x^2 - 3 x + 1/10$ with three real roots, $c_5(x) = -2 x^3 + 3 x^2 + 6 x - 1$ with three real roots, and $c_6(x) = x^3 - 6 x^2 + 3 x + 2$ also with three real roots. All quadratic elements of this polynomial are positive: $A_8 = 1, \, A_7 = 3, \, A_6 = 5, \, A_5 = 5, \, A_4 = 4/5, \, A_3 = 101/100, \, A_2 = 4/5, \, A_1 = 3, $ and $A_0 = 1$. However, none of the two coefficients $a_1 = 2$ [from $c_2(x)$] and $a_5 = 2$ [from $c_3(x)$], adjacent to $A_3$ [which is shared by $c_2(x)$ and $c_3(x)$ --- both with a single real root], lies in its relevant prescribed interval. Also, for the quadratic element $A_2$ [shared by $c_1(x)$ and $c_2(x)$ --- the first with three real roots, the second with just one], the adjacent coefficients $a_0 = 1$ [from $c_1(x)$] and $a_4 = -1$ [from $c_2(x)$] are such that $a_0$ belongs to its prescribed interval, while $a_4$ does not. Likewise, for the quadratic element $A_4$ [shared by $c_3(x)$ and $c_4(x)$ --- the first with one real root and the second with three], the two adjacent coefficients $a_2 = 1$ [from $c_3(x)$] and $a_6 = 1$ [from $c_4(x)$] are such that $a_2$ does not belong to its prescribed interval, while $a_6$ does. Hence, $A_2, \,\, A_3$ and $A_4$ are all {\it falsely positive quadratic elements}. As these form a group and this group is neighboured by positive quadratic elements ($A_1 = 3$ and $A_5 = 5$), the signs of $A_2, \,\, A_3$ and $A_4$ should be all changed from plus to minus.

If the original Newton's Rule is applied, as all quadratic elements are positive, the number of sign changes in the sequence of coefficients (simple elements) gives an upper bound on the number of positive roots. There are four sign changes, hence, there are maximum 4 positive roots. The number of permanencies in the sequence of coefficients gives an upper bound on the number of negative roots. There are 4 permanencies, hence there are maximum 4 negative roots.

With the proposed modification, the signs of $A_2, \,\, A_3$ and $A_4$ are now negative. The number of permanencies in the sequence of quadratic elements is now 6, not 8. Applying the Complete Newton's Rule to this modified sequence of quadratic elements yields maximum 3 positive roots and maximum 3 negative roots. These are  stricter bounds on the positive and negative roots than the ones provided by the unmodified sequence of quadratic elements, as in Newton's original Rule.

One can also compare this result with what would be obtained if the Descartes' rule of signs is used. The sequence of the coefficients of the polynomial exhibits 4 sign changes, thus the number of positive roots can be either 4, or 2, or 0. Replacing $x$ with $-x$ yields the polynomial $p(-x) = x^8 + 16 x^7 + 28 x^6 - 112 x^5 - 70 x^4 - (28/5)x^3 + 28 x^2 - 16 x + 1$. There are four sign changes in the sequence of the coefficients of the polynomial $p(-x)$, hence the number of negative roots is either 4, or 2, or 0.

If the Descartes' rule of signs is used in conjunction with the modified Newton's rule, one obtains that the number of positive roots in this example are either 2 or 0 and that the number of negative roots is also either 2 or 0.

Fourier's theorem \cite{f} (also referred to as Budan--Fourier Theorem or Budan Theorem) is \cite{akr}:

\begin{theorem}[Fourier]
If in the sequence of $n+1$ functions given by $S(x) = \{ p(x), \, p'(x), $ \linebreak $ \, p''(x), \ldots, p^{(n)}(x) \}$, where $p^{(n)}(x)$ is the $n^{\mbox{\scriptsize th}}$ derivative of $p(x)$, one first replaces $x$ by $q$ and then replaces $x$ by $r$, where $q < r$, then the following holds for the two sequences $S(q)$ and $S(r)$:
\begin{itemize}
\item[(i)] The sequence $S(r)$ cannot have more sign variations than the sequence $S(q)$.
\item[(ii)] The number $\rho$ of real roots of the equation $p(x) = 0$, located between $q$ and $r$, cannot be greater than the number $v$ of sign variations lost in passing from the sequence $S(q)$ to the sequence $S(r)$.
\item[(iii)] When $\rho < v$, the difference is an even number.
\end{itemize}
\end{theorem}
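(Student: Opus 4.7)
The plan is to study $V(x)$, the number of sign variations in the sequence $S(x) = \{p(x), p'(x), \ldots, p^{(n)}(x)\}$, as a real-valued step function of $x$. Since each $p^{(k)}$ is a polynomial, $V$ is constant on every open interval free of zeros of the derivatives, so all the work reduces to controlling the jump $V(\xi^+) - V(\xi^-)$ at each point $\xi \in (q, r)$ where some $p^{(k)}$ vanishes. The three parts of the theorem will follow from a single local claim: $V$ is nonincreasing across every such $\xi$; it drops by exactly the multiplicity of $\xi$ as a root of $p$ when $p(\xi) = 0$; and it drops by a nonnegative even integer when $\xi$ is a zero only of interior derivatives $p^{(k)}$ with $k \geq 1$.

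For the boundary case $p(\xi) = 0$, I would write $p(x) = (x - \xi)^m q(x)$ with $q(\xi) \neq 0$ and use Leibniz to compute $p^{(k)}(\xi \pm \varepsilon)$ for $0 \leq k \leq m$. To leading order, $\mathrm{sign}\, p^{(k)}(\xi - \varepsilon) = (-1)^{m-k}\,\mathrm{sign}\, q(\xi)$, while $\mathrm{sign}\, p^{(k)}(\xi + \varepsilon) = \mathrm{sign}\, q(\xi)$. Hence the first $m+1$ entries of $S$ alternate in sign just to the left of $\xi$ (contributing $m$ variations) and are constant just to the right (contributing $0$ variations), while the tail $p^{(m)}, \ldots, p^{(n)}$ keeps its signs across $\xi$. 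Thus $V$ drops by exactly $m$.

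For the interior case, consider a maximal block $p^{(k)}(\xi) = \cdots = p^{(k+j-1)}(\xi) = 0$ with $k \geq 1$, $p^{(k-1)}(\xi) \neq 0$ and $p^{(k+j)}(\xi) \neq 0$. Using the leading Taylor term of each derivative at $\xi$, one reads off the signs of the block entries immediately to the left and right of $\xi$. A case split on the parity of $j$ combined with the two bracketing signs $\mathrm{sign}\, p^{(k-1)}(\xi)$ and $\mathrm{sign}\, p^{(k+j)}(\xi)$ then shows that the number of variations contributed by the $j+2$ consecutive entries $p^{(k-1)}, \ldots, p^{(k+j)}$ either stays the same or drops by $2$, and never increases. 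Summed over all such blocks occurring at $\xi$, the interior contribution to the drop is nonnegative and even.

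Assembling the local lemma, $V(q) - V(r)$ equals the sum of the individual drops over all zeros $\xi \in (q, r)$. The boundary drops contribute precisely $\rho$, counted with multiplicity, while the interior drops contribute a nonnegative even integer. This yields (i) immediately; it gives $\rho \leq V(q) - V(r) = v$, which is (ii); and it forces $v - \rho$ to be a sum of nonnegative even integers, proving (iii). The hard part will be the interior bookkeeping when $j \geq 2$: one must verify the parity conclusion uniformly in $j$ and in the two bracketing signs, and confirm that the argument is unaffected by the possibility that several disjoint vanishing-blocks occur at the same $\xi$, which is handled because their contributions to $V$ are additive.
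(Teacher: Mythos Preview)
The paper does not prove this theorem: it merely states Fourier's theorem with a citation and then applies it numerically to the degree-8 example. So there is no in-paper argument to compare against, and your sketch supplies what the paper omits.

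Your approach is the classical one and is sound in outline. Two small points to tighten. First, your interior-block claim that the variation count ``either stays the same or drops by $2$'' is only literally true for $j=1$. With $p^{(k)}(\xi)=\cdots=p^{(k+j-1)}(\xi)=0$, $p^{(k-1)}(\xi)\ne 0$, $p^{(k+j)}(\xi)\ne 0$, the leading-term computation gives a drop of exactly $j$ when $j$ is even, and $j-1$ or $j+1$ when $j$ is odd, depending on whether the bracketing signs agree. In every case the drop is a nonnegative even integer, which is the conclusion you actually need; just correct the intermediate assertion. Second, in the boundary case your phrase ``the tail $p^{(m)},\ldots,p^{(n)}$ keeps its signs across $\xi$'' is not quite right, since some $p^{(k)}$ with $k>m$ may also vanish at $\xi$; you already accommodate this by treating those as separate interior blocks, but the sentence should be weakened to ``$p^{(m)}$ keeps its sign, and any further vanishing in the tail is handled by the interior-block analysis.'' With those two edits (and the standard caveat that one assumes $p(q)\ne 0$, $p(r)\ne 0$, or else treats the endpoints by one-sided limits), the argument goes through.
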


The signs of the Fourier sequence at $x = -3$ are $\{+, -, +, -, +, -, +, -, +\}$. At $x = 0$, they are $\{+, +, +, +, -, +, +, -, +\}$. There are 8 sign variations in the first sequence and 4 sign variations on the second one. The difference is 4. Hence, the number of real roots between $-3$ and $0$ is either 4, or 2, or zero. The signs of the Fourier sequence at $x = 15$ are all positive. The difference between the sign variations in the sequence $S(0)$ and those in the sequence $S(15)$ is again 4.  Hence, the number of real roots between $0$ and $15$ is either 4, or 2, or zero.

One can also use Sturm's theorem \cite{sturm}.
\begin{theorem}[Sturm]
Consider the sequence $\widetilde{S}(x) = \{ p_0(x), p_1(x), \ldots , p_n(x) \}$, in which $p_0(x) = p(x),  p_1(x) = p'(x), p_2(x) =  - \mbox{rem} \{ [p_0(x)]/[p_1(x)] \}, \ldots, p_n(x) = - \mbox{rem} \{ [p_{n-2}(x)]$ $/[p_{n-1}(x)] \},$ where rem$\{[p_i(x)]/[p_{i+1}(x)]\}$ denotes the remainder of the division of the polynomial $p_i(x)$ by the polynomial $p_{i+1}(x)$.

If $p(x)$ has only simple zeros, then the number of real roots on the interval $(q, r)$ is equal to the difference of sign variations in the sequences $\widetilde{S}(q)$ and $\widetilde{S}(r)$.
\end{theorem}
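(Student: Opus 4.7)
The plan is to track how the number of sign variations $V(x)$ in the Sturm sequence $\widetilde{S}(x) = \{p_0(x), p_1(x), \ldots, p_n(x)\}$ changes as $x$ moves from $q$ to $r$, and to show that $V$ decreases by exactly one each time $x$ crosses a real root of $p$ and is unchanged everywhere else. Since $V$ is locally constant except possibly at points where some $p_i$ vanishes, the argument splits naturally into two cases: zeros of an interior $p_i$ (with $0 < i < n$) versus zeros of $p_0 = p$ itself.

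First I would establish three structural properties of the sequence. (i) Because $p$ has only simple zeros, $\gcd(p, p') = 1$, so the Euclidean-remainder recursion terminates with a nonzero constant $p_n$ which never vanishes. (ii) No two consecutive members $p_i, p_{i+1}$ can vanish at the same point $c$, for if they did, the defining identity $p_{i-1}(x) = q_i(x)\,p_i(x) - p_{i+1}(x)$ would force $p_{i-1}(c) = 0$, and iterating downward would give $p_n(c) = 0$, contradicting (i). (iii) If $p_i(c) = 0$ for some $0 < i < n$, evaluating the same identity at $c$ gives $p_{i-1}(c) = -p_{i+1}(c)$, so the neighbours take strictly opposite nonzero signs.

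Next I would handle the interior case. At a zero $c$ of $p_i$ with $0 < i < n$, property (iii) forces the triple $(p_{i-1}, p_i, p_{i+1})$ to have signs of the form $(\varepsilon, \ast, -\varepsilon)$ in a small neighbourhood of $c$, where $\varepsilon \in \{+,-\}$ is fixed and $\ast$ is whatever sign $p_i$ happens to present on either side. Regardless of $\ast$, such a triple contributes exactly one variation, so the local contribution to $V$ is the same just to the left and just to the right of $c$. Hence interior zeros never change $V$. (If several interior $p_i$'s vanish simultaneously at $c$, property (ii) guarantees they are non-consecutive, so the argument applies independently to each.)

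The decisive step is the zero of $p_0 = p$. Since $c$ is a simple root, $p_1(c) = p'(c) \neq 0$, and $p(x)$ changes sign across $c$ from opposite to $p'(c)$ to the same as $p'(c)$. Thus the pair $(p_0, p_1)$ is a sign variation just to the left of $c$ and a sign permanence just to the right, so $V$ drops by exactly one as $x$ crosses $c$; meanwhile all other pairs in the sequence are unaffected by (iii) applied at other indices. Summing the contributions over every real root of $p$ in $(q,r)$ yields $V(q) - V(r)$ equal to the number of such roots, which is the claim. The main obstacle is the bookkeeping at interior zeros: one must be convinced that the sign of $p_i$ itself is immaterial because $p_{i-1}$ and $p_{i+1}$ bracket it with opposite signs, and one must rule out the degenerate possibility of simultaneous vanishing of consecutive members via property (ii); once these points are settled, the rest of the argument is a straightforward count over the finitely many subintervals cut out by the zeros of the sequence.
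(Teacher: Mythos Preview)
The paper does not actually prove Sturm's theorem; it merely states it with a citation to Sturm's original 1829 paper and then applies it numerically to the degree-8 example. So there is no ``paper's own proof'' to compare against.

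Your proposal is the standard classical argument and is essentially correct. One small slip: in property~(ii) you write that from $p_i(c)=p_{i+1}(c)=0$ the identity $p_{i-1}=q_i p_i - p_{i+1}$ forces $p_{i-1}(c)=0$, and ``iterating downward would give $p_n(c)=0$.'' Iterating that particular identity in the direction of decreasing index actually yields $p_0(c)=p_1(c)=0$, i.e.\ $p(c)=p'(c)=0$, which already contradicts the simplicity of the zeros. If you prefer to land on $p_n(c)=0$, you should instead use the companion identity $p_{i+2}=q_{i+1}p_{i+1}-p_i$ and iterate toward increasing index. Either direction produces a contradiction, so the logic survives; just make the direction and the identity match. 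Beyond that, you might add a sentence noting that $q$ and $r$ are taken not to be zeros of any $p_i$ (or that the convention for counting variations at a zero is immaterial by properties~(ii)--(iii)), but this is a cosmetic point.
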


The signs of the Sturm sequence at $x = - 3$ are $\{ +, -, +, -, +, -, -, -, + \}$ and those at $x = 0$ are $\{ +, +, -, +, +, -, -, +, + \}$. There are 6 sign changes in  $\widetilde{S}(-3)$ and four sign changes in  $\widetilde{S}(0)$. The difference is 2. Hence there are 2 real roots between $-3$ and $0$.

The signs of the Sturm sequence at $x = 15$ are $\{ +, +, +, +, +, +, -, +, + \}$. There are 2 sign changes in it. As the number of sign changes in $\widetilde{S}(0)$ is 4 and this exceeds the number of sign changes in $\widetilde{S}(15)$ by 2, then there are 2 real roots between $0$ and $15$.

Of course, one can use $-\infty$ instead of $q$ and $+\infty$ instead of $r$. Then only the leading terms in each element of the Sturm sequence is relevant, with $\widetilde{S}(-\infty) - \widetilde{S}(0)$ giving the number of negative roots (which is 2) and $\widetilde{S}(0) - \widetilde{S}(\infty)$ giving the number of positive roots (also 2).

Despite the fact that Sturm's theorem gives the exact number of roots, this method, in comparison to the modified Newton's method, is very intensive labour-wise.

\section{Example with Parametric Polynomial --- Rayleigh Waves}

The Rayleigh waves are elastic surface waves which propagate on the surface of solids \cite{landau}. These are superposition of two waves which are propagated independently and the equation of motion is \cite{landau}:
\b
\label{eom}
\frac{\partial^2 \vec{u}}{\partial t^2} - c^2 \Delta \vec{u} = 0,
\e
where $\vec{u} = \vec{u}_l + \vec{u}_t$ is the displacement vector with $\vec{u}_l$ being the displacement in the direction of propagation (longitudinal wave) and $\vec{u}_t$ being the displacement in a plane, perpendicular to the direction of propagation (transverse wave).

The transverse wave $\vec{u}_t$ satisfies div$\,\vec{u}_t = 0$, while for the longitudinal wave $\vec{u}_l$ one has curl$\,\vec{u}_l = 0$. These yield \cite{landau}:
\b
\label{eqqs1}
a (k^2 - \kappa_t^2) + 2 b k \kappa_l & = & 0, \\
2 a k \kappa_t + b (k^2 + \kappa_t^2) & = & 0,
\e
where $a$ and $b$ are some constants, $k$ is the wave number, $\kappa_t = \sqrt{k^2 - \omega^2/c_t^2}$ and $\kappa_l = \sqrt{k^2 - \omega^2/c_l^2}$ are the rapidities of the transverse and longitudinal damping. Here $\omega$ is the angular frequency and $c_l$ and $c_t$ are the longitudinal and transverse speeds of sound \cite{landau}. The condition for compatibility of equations in (\ref{eqqs1}) and (29) is given by \cite{landau}:
\b
\label{cond}
\left( 2 k^2 - \frac{\omega^2}{c_t^2} \right)^4 = 16 k^4 \left( k^2 - \frac{\omega^2}{c_t^2} \right) \left( k^2 - \frac{\omega^2}{c_l^2} \right).
\e
Introducing $\xi = \omega/(c_t k) > 0$, $x = \xi^2$, and $q = c_t^2/c_l^2$, from (\ref{cond}), one gets the cubic equation \cite{landau}
\b
\label{land}
x^3 + 3 \!\left(-\frac{8}{3} \right) \! x^2 + 3 \, \frac{8 (3 - 2q)}{3} \, x - 16(1 -q) = 0
\e
--- written, conveniently, in binomial form.
The allowed values of the parameter $q$ satisfy $0 \le q < 3/4$ --- see \cite{landau}. 

The conditions on the parameter $q$ for one or three real roots of this cubic equation and the isolation intervals of the roots have been determined in \cite{27}. 

The sequence of simple elements for the cubic (\ref{land}) is:
\b
a_3 = 1 > 0, \quad a_2 = -\frac{8}{3} < 0, \quad a_1 = \frac{8}{3} (3 - 2q) > 0, \quad a_0 = - 16(1 - q) < 0
\e
--- given that $0 \le q < 3/4$, \, $a_1$ is positive for $q < 3/2$, while $a_0$ is negative for $q < 1$.
The sequence of quadratic elements for the cubic (\ref{land}) is:
\b
A_3 & \!\! = \!\! & a_3^2 \, = \, 1 > 0, \qquad A_2 \, = \, a_2^2 - a_1 a_3 \, = \, \frac{8}{9} (6q -1), \nonumber \\
A_1 & \!\! = \!\! & a_1^2 - a_0 a_2 \, = \, \frac{64}{9} (4q^2 - 6q +3) > 0, \qquad A_0 \, = \, a_0^2 \, = \, 256(1 - q)^2 > 0.
\e
Clearly, $A_2 > 0$ when $q > 1/6$.

Applying the original Newton's rule on the double sequence \raisebox{-.1cm}{$\stackrel{\scriptstyle{a_3, \,\, a_2, \,\, a_1, \,\, a_0}}{\scriptstyle{A_3, \, A_2, \, A_1, \, A_0}}$}, \,  one immediately determines that when $q > 1/6$, all quadratic elements are positive. The signs in this double sequence are \raisebox{-.1cm}{$\stackrel{\scriptstyle{+, -, +, -}}{\scriptstyle{+, +, +, +}}$}, yielding 3 permanencies in the sequence of quadratic elements and 3 variations in the sequence of simple elements. Hence, the number of real roots is 3 or 1. They are all positive, as there are no double permanencies. On the other hand, when $q < 1/6$, one has $A_2 < 0$ and the signs in the double sequence in this case are \raisebox{-.1cm}{$\stackrel{\scriptstyle{+, -, +, -}}{\scriptstyle{+, +, -, +}}$}. There is only one variation permanence and this indicates one positive root. There are no double permanencies, hence there are no negative roots.

Consider next whether the modified Newton's rule applies for this example, that is, whether falsely positive quadratic elements exist. Indeed, this is the case: the quadratic elements $A_2 = a_2^2 - a_1 a_3 = (8/9) (6q -1)$ and $A_1 =  a_2^2 - a_1 a_3 = (64/9) (4q^2 - 6q +3)$ are neighboured by two positive quadratic elements ($A_3$ and $A_0$) and are themselves both positive for $q > 1/6$ (the latter is positive for all $q$). However, for certain values of $q > 1/6$, it will turn out that the quadratic elements $A_2$ and $A_1$ are both, {\it simultaneously}, falsely positive and their signs should be changed to minus.

The adjacent coefficient $a_0 = - 16(1 - q)$ of the quadratic element $A_2$ has its prescribed interval given in {\bf Definition 8}, namely:
\b
\left[ \frac{- 3 a_2 A_2 + a_2^3 - 2 A_2^{3/2}}{a_3^2},  \frac{- 3 a_2 A_2 + a_2^3 + 2 A_2^{3/2}}{a_3^2} \right]   \hskip4.5cm \phantom{emp} \nonumber \\
= \left[ \frac{64}{27}(18q - 11) - \frac{32\sqrt{2}}{27} (6q - 1) \sqrt{6q - 1}, \,\,\, \frac{64}{27}(18q - 11) + \frac{32\sqrt{2}}{27} (6q - 1) \sqrt{6q - 1} \right]
\e
--- see also (\ref{c12}). 

It is easy to see that $a_0 > (64/27)(18q - 11) - (32\sqrt{2}/27) (6q - 1) \sqrt{6q - 1}$ for all $q$, while $a_0 < (64/27)(18q - 11) + (32\sqrt{2}/27) (6q - 1) \sqrt{6q - 1}$ for $q < \widetilde{q}$, where $\widetilde{q}$ is the single real root of
\b
\label{qqq}
64 q^3 - 107 q^2 + 62 q - 11 = 0,
\e
namely $\widetilde{q} \approx 0.3215 > 1/6$ (the other two roots are $0.6752 \pm 0.2806i$).

Then, for $1/6 < q < \widetilde{q}$, the adjacent coefficient $a_0 = - 16(1 - q)$ of the quadratic element $A_2$ is not in its prescribed interval, hence the quadratic element $A_2$ is falsely positive and its sign should be changed to minus. 

If $q$ is such that the quadratic element $A_2$ is falsely positive, resulting in a cubic with one real root only, then the reciprocal cubic will also have one real root only. Hence, over the same range $1/6 < q < \widetilde{q}$, the quadratic element $A_1 =  a_2^2 - a_1 a_3 = (64/9) (4q^2 - 6q +3)$ (which is positive for all $q$) will also be falsely positive --- its adjacent coefficient $a_0$ will not belong to its prescribed interval. Thus, the sign of $A_1$ must also be changed to minus over the interval $1/6 < q < \widetilde{q}$.  Therefore, in the case of $1/6 < q < \widetilde{q}$, one should instead use the following sequence of quadratic elements: $\widetilde{A}_3 = A_3, \, \widetilde{A}_2 = - A_2, \, \widetilde{A}_1 = - A_1, \, \widetilde{A}_0 = A_0$, that is, the modified Newton's rule applies. Namely, one should instead use the double sequence \raisebox{-.1cm}{$\stackrel{\scriptstyle{a_3, \,\, a_2, \,\, a_1, \,\, a_0}}{\scriptstyle{\widetilde{A}_3, \, \widetilde{A}_2, \, \widetilde{A}_1, \, \widetilde{A}_0}}$}, the signs in which are \raisebox{-.1cm}{$\stackrel{\scriptstyle{+, -, +, -}}{\scriptstyle{+, -, -, +}}$}. There is only one variation permanence, indicating one positive root, and no double permanencies, hence no negative roots --- exactly as in the case of $q < 1/6$. For example, taking $q = 0.3210$ --- just under $\widetilde{q} = 0.3215$ --- yields the following roots of (\ref{land}): $0.8494, \,\, 3.5753 \pm 0.0869i$.

On the other hand, for $q > \widetilde{q}$, the adjacent coefficient $a_0 = - 16(1 - q)$ of the quadratic element $A_2$ belongs to its prescribed interval, hence the quadratic element $A_2$ is truly positive and its sign should not be changed. Similarly, $A_1$ will also be truly positive --- otherwise the equation and its reciprocal will not have the same number of real roots, which is impossible. The situation in this case is exactly the same as that in the original Newton's rule for $q > 1/6$: all quadratic elements are positive, i.e. there are three permanencies in their sequence, while there are three variations in the sequence of simple elements. Hence, the number of real roots is 3 or 1. Indeed, taking $q = 0.3220$ --- just over $\widetilde{q} = 0.3215$ --- yields the following roots of (\ref{land}): $0.8491, \,\, 3.4884, \,\, 3.6625$.

When  $q = \widetilde{q}$, equation (\ref{land}) has 3 real roots, two of which equal: $0.8492,  3.5754, \, 3.5754$.

It should be noted that, given the presence of the parameter $q$, the use of the Cardano formul\ae\, would not yield helpful information for the roots of equation (\ref{land}). However the cubic (\ref{qqq}) is easily solvable and in result, the modified Newton's rule gives stricter limit on the number of real roots.

\section{Algorithm for Applying the Modified Newton's Method}

\begin{enumerate}
\item [(i)] Input the coefficients $\alpha_j, \,\,\, j = 0, 1, \ldots , n$ (with $\alpha_n \ne 0$), of the polynomial $p_n(x) = \alpha_0 + \alpha_1 x + \cdots + \alpha_n x^n$.
\item [(ii)] Pass into the equivalent polynomial in binomial form $p_n(x) = \sum_{k=0}^n {n \choose k} a_k x^k$, where $a_k = \alpha_k / {n \choose k}$.
\item [(iii)] Calculate the quadratic elements $A_0 = a_0^2, \,\, A_j = a_j^2 - a_{j-1} a_{j+1}$ for $j = 1, 2, \ldots, n-1$, and $A_n = a_n^2$ (noting that $A_0$ and $A_n$ are always positive).
\item [(iv)] Introduce $\vec{a} = (a_0, \, a_1, \, \ldots \, a_n)$ --- the sequence of the coefficients of the polynomial in binomial form (the so-called simple elements). This is an $(n+1)$-vector.
\item [(v)] Introduce $\vec{A} = (A_0, \, A_1, \, \ldots \, A_n)$ --- the sequence of the quadratic elements of the polynomial in binomial form. This is another  $(n+1)$-vector.
\item [(vi)] Determine if in the sequence $\vec{A}$ of quadratic elements there is a group of three or more ($m + 1$) neighbouring positive quadratic elements, that is, find out whether sign$(A_j) = $ sign$(A_{j+1}) = $ sign$(A_{j+2}) = \ldots = $ sign$(A_{j+m}) = 1$ and whether \linebreak sign$(A_{j-1}) =  -1,$ for some $j$ from $0$ to $n-2$ and $j + m \le n$ (with $A_{-1}$ defined as negative). Recall that $A_0$ and $A_n$ are both always positive. If no such sequence exists, Newton's method will not be modified and one should proceed with step (xii).
\item [(vii)] Determine whether the first of these, $A_j$, is not a falsely positive quadratic element. This is done by verifying that both of its adjacent coefficients $a_{j-2}$ and $a_{j+2}$ lie in their prescribed intervals --- respectively, $I_{j-2} = [ (- 3 a_j A_j + a_j^3 - 2 A_j^{3/2} ) /  a_{j+1}^2,   \,\,  (- 3 a_j A_j + a_j^3 + 2 A_j^{3/2} ) / a_{j+1}^2 ]$, for $j = 2, 3, ..., n-1,$ and $K_{j+2} = [ (- 3 a_j A_j + a_j^3 - 2 A_j^{3/2} ) /  a_{j-1}^2, \,\,  (- 3 a_j A_j + a_j^3 + 2 A_j^{3/2} ) / a_{j-1}^2 ]$, for $j = 1, 3, ..., n-2$. Recall that $A_1$ has only one adjacent coefficient ($a_3$) and $A_{n-1}$ also has only one adjacent coefficient ($a_{n-3}$).
\item [(viii)] If $A_j$ is falsely positive, disregard $A_j$ and repeat the previous step (vii) for the next quadratic element, $A_{j+1}$, from the sub-sequence found in step (vi). Keep repeating until a truly positive quadratic elements is found. If no such quadratic element exists, Newton's method will not be modified and one should proceed with step (xii).
\item [(ix)] Once a truly positive quadratic element is found, verify whether the following quadratic element is truly positive. If so, disregard the first quadratic element and keep repeating this step until a falsely positive quadratic element is found which follows a truly positive quadratic element.
\item [(x)] If the sub-sequence of a truly positive quadratic element and a falsely positive quadratic element is followed by a negative quadratic element, disregard that sequence and keep repeating this step until a sequence is found in which the first and the last quadratic elements are truly positive and those in between --- falsely positive. Note that there is at least one truly positive quadratic element until the end ($A_n$ is such). If no such sequence exists, Newton's method will not be modified and one should proceed with step (xii).
\item [(xi)] Change the signs of all falsely positive quadratic elements in the sequence from step (x).
\item [(xii)] Let $\vec{s}\,\, $ be the $n$-vector describing the variances in sign in the sequence $\vec{a}$ of the simple elements, namely: $\vec{s} = (1/2) ( |$sign$(a_1) - $sign$(a_0)|, |$sign$(a_2) - $sign$(a_1)|, \ldots , $\linebreak $ |$sign$(a_n) -$ sign$(a_{n-1})|).$ Note that $\vec{s}$ contains only ones (denoting variances in sign) and zeros (denoting permanencies in sign).
\item [(xiii)] Similarly, let $\vec{S}$ be the $n$-vector describing the variances in sign in the sequence $\vec{A}$ of the quadratic elements (whether modified or not): $\vec{S} = (1/2) ( |$sign$(A_1) - $sign$(A_0)|, $ \linebreak $ |$sign$(A_2) - $sign$(A_1)|, \ldots , |$sign$(A_n) - $sign$(A_{n-1})|).$ In the same manner, $\vec{S}$ contains only ones (denoting variances in sign) and zeros (denoting permanencies in sign).
\item [(xiv)] Calculate $\vec{q} = \vec{s} - \vec{S}$. The $n$-vector $\vec{q}$ contains minus ones (denoting permanence $p$ in the sequence $\vec{a}$ and variance $V$ in the sequence $\vec{A}$); ones (denoting variance $v$ in the sequence $\vec{a}$ and permanence $P$ in the sequence $\vec{A}$); and zeros (denoting either permanence $p$ in the sequence $\vec{a}$ and permanence $P$ in the sequence $\vec{A}$ or variance $v$ in the sequence $\vec{a}$ and variance $V$ in the sequence $\vec{A}$).
\item [(xv)] The number of permanencies-variances ($pV$) is given by $(1/2)\sum_{j=1}^n(q_j - | q_j | )$. The maximum number of positive roots is given by the number $vP$ of variances-permanencies and is equal to $(1/2)\sum_{j=1}^n(q_j + | q_j | )$.
\item [(xvi)] Calculate $\vec{\widetilde{Q}} = \vec{s} + \vec{S}$. The $n$-vector $\vec{\widetilde{Q}}$ contains zeros (denoting permanence $p$ in the sequence $\vec{a}$ and permanence $P$ in the sequence $\vec{A}$); twos (denoting variance $v$ in the sequence $\vec{a}$ and variance $V$ in the sequence $\vec{A}$); and ones (denoting either permanence $p$ in the sequence $\vec{a}$ and variance $V$ in the sequence $\vec{A}$ or variance $v$ in the sequence $\vec{a}$ and permanence $P$ in the sequence $\vec{A}$).
\item [(xvii)] Calculate $\vec{Q} = \vec{\widetilde{Q}} - \vec{e}$, where the $n$-vector $\vec{e}$ contains only ones.
\item [(xviii)] The number of variances-variances ($vV$) is given by $(1/2)\sum_{j=1}^n(Q_j + | Q_j | )$. The maximum number of negative roots is given by the number $pP$ of permanencies-permanencies and is equal to $(1/2)\sum_{j=1}^n(Q_j - | Q_j | )$.
\end{enumerate}

\section{New Necessary Condition for the Reality of All Roots of a Real  Polynomial. Example}

The analysis of the cubic sectors of a real polynomial also allows the formulation of a new necessary condition for the reality of its roots: {\it if the roots of a real polynomial are all real numbers, then each quadratic element of the polynomial is positive and each of its adjacent coefficients lies in its relevant prescribed interval}.

An equivalent form of this new necessary condition is the following: {\it if all of the roots are real, then the polynomial cannot have negative, vanishing, or falsely positive quadratic elements}.

It should be noted that a polynomial cannot have a single falsely positive quadratic element. This can be seen in the following manner. If there is one cubic sector with a falsely positive quadratic element, then the reciprocal cubic polynomial, that is, a neighbouring cubic sector, will have either another falsely positive quadratic element (meaning that the polynomial has more than one falsely positive quadratic elements) or a negative quadratic element. As the passage from a negative quadratic element to a positive quadratic element necessarily happens through a falsely positive quadratic element (with either one or both of its associated adjacent coefficients outside of their relevant prescribed intervals), there cannot be a polynomial with a single falsely positive quadratic element.

As example for this section, consider the quintic polynomial $x^5 + x^4 - 28 x^3 + 32 x^2 + 96 x - 144 = x^5 + 5 (1/5) x^4 + 10 (-28/10) x^3 + 10 (32/10) x^2 + 5 (96/5) x + (-144)$. The latter is in binomial form. All of its roots are real: $x_1 = 3, \,\, x_2 = x_3 = 2, \,\, x_4 = -2,$ and $x_5 = -6$. Hence, all of the quadratic elements are positive: $A_0 = 20736, \,\, A_1 = 20736/25, \,\, A_2 = 64, \,\, A_3 = 36/5, \,\, A_4 = 71/25,$ and $A_5 = 1$.  There are no falsely positive quadratic elements. All this can be seen as follows. The cubic sectors of this quintic polynomial are three. The first of them is $c_1(x) = - (28/10) x^3 + 3(32/10) x^2 + 3(96/5) x - 144$. The two quadratic elements of this cubic sector are $A_1 = (96/5)^2 - (32/10)(-144) =  20736/25$ and $A_2 = (32/10)^2 - (96/5)(-28/10) =  64$. The adjacent coefficient of $A_1$ is $a_3 = -28/10$ and the adjacent coefficient of $A_2$ is $a_0 = -144$. The prescribed interval of the adjacent coefficient $a_3$ is $[ (- 3 a_1 A_1 + a_1^3 - 2 A_1^{3/2})/a_0^2, \,\, (- 3 a_1 A_1 + a_1^3 + 2 A_1^{3/2})/a_0^2] = [-4.267, 0.341]$. The coefficient $a_3$ is $-28/10$ and it belongs to its prescribed interval. The prescribed  interval of the adjacent coefficient $a_0$ is $[ (- 3 a_2 A_2 + a_2^3 - 2 A_2^{3/2})/a_3^2, \,\, (- 3 a_2 A_2 + a_2^3 + 2 A_2^{3/2})/a_3^2] = [-204.800, 56.424]$. The coefficient $a_0$ is $-144$ and it also belongs to its prescribed interval. One should not even have to verify this: as $A_1 > 0$ and as its adjacent coefficient $a_3$ is in its prescribed interval, then the other quadratic element, $A_2$, is also positive and its adjacent coefficient $a_0$ will also be in its prescribed interval --- obvious from the relationship between the roots of reciprocal polynomials. The roots of the first cubic sector are all real: $5.470, \,\, 2.211,$ and $-4.253$.

Similar arguments apply to the other two quadratic sectors. The second of them is $c_2(x) = (1/5) x^3 + 3(-28/10) x^2 + 3 (32/10) x + 96/5$. The two quadratic elements of
this cubic sector are $A_2 = (32/10)^2 - (-28/10)(96/5) =  64$ and $A_3 = (-28/10)^2 - (1/5)(32/10) =  36/5$. The adjacent coefficient of $A_2$ is $a_4 = -28/10$ and the adjacent coefficient of $A_3$ is $a_1 = 96/5$. The prescribed interval of the adjacent coefficient $a_4$ is $[ (- 3 a_2 A_2 + a_2^3 - 2 A_2^{3/2})/a_1^2, \,\, (- 3 a_2 A_2 + a_2^3 + 2 A_2^{3/2})/a_1^2] = [-4.356, 1.200]$. The coefficient $a_4$ is $1/5$ and it belongs to its prescribed interval. The prescribed  interval of the adjacent coefficient $a_1$ is $[ (- 3 a_3 A_3 + a_3^3 - 2 A_3^{3/2})/a_4^2, \,\, (- 3 a_3 A_3 + a_3^3 + 2 A_3^{3/2})/a_4^2] = [-2.781, 1929.182]$. The coefficient $a_1$ is $96/5$ and it also belongs to its prescribed interval. The cubic sectors $c_1(x)$ and $c_2(x)$ ``share" the quadratic element $A_2$. The positivity of $A_2$ and the fact that its adjacent in $c_2(x)$ coefficient $a_4$ is in its prescribed interval, guarantees positivity of $A_3$ and that the coefficient $a_1$, adjacent in $c_2(x)$ to $A_3$, will also be in its prescribed interval. The three real roots of the cubic sector $c_2(x)$ are: $40.760, \,\, 2.272,$ and $-1.037,$

The third cubic sector is $c_3(x) = x^3 + 3(3/15) x^2 + 3(-28/10)x + 32/10$.  The two quadratic elements of this cubic sector are $A_3 = (-28/10)^2 - (3/15)(32/10) =  36/5$ and $A_4 = (3/15)^2 - (1)(-28/10) =  71/25$. The adjacent coefficient of $A_3$ is $a_5 = 1$ and the adjacent coefficient of $A_4$ is $a_2 = 32/10$. The prescribed interval of the adjacent coefficient $a_5$ is $[ (- 3 a_3 A_3 + a_3^3 - 2 A_3^{3/2})/a_2^2, \,\, (- 3 a_3 A_3 + a_3^3 + 2 A_3^{3/2})/a_2^2] = [-0.011, 7.536]$. The coefficient $a_5$ is $1$ and it belongs to its prescribed interval. The prescribed  interval of the adjacent coefficient $a_2$ is $[ (- 3 a_4 A_4 + a_4^3 - 2 A_4^{3/2})/a_5^2, \,\, (- 3 a_4 A_4 + a_4^3 + 2 A_4^{3/2})/a_5^2] = [-11.268, 7.876]$. The coefficient $a_2$ is $32/10$ and it also belongs to its prescribed interval. The cubic sectors $c_2(x)$ and $c_3(x)$ ``share" the quadratic element $A_3$. The positivity of $A_3$ and the fact that its adjacent in $c_3(x)$ coefficient $a_5$ is in its prescribed interval, guarantees the positivity of $A_4$ and that the coefficient $a_2$, adjacent in $c_3(x)$ to $A_4$, will also be in its prescribed interval. The three real roots of the cubic sector $c_3(x)$ are: $0.4000, \,\, 2.372,$ and $-3.372$.

\section{Relationship among the Discriminants of Real Polynomials, the Discriminants of their Derivatives, and the Quadratic Elements}

As seen earlier (in Section 5), the discriminant $\Delta_3$ of the cubic polynomial $a_3 x^3 + 3 a_2 x^2 + 3 a_1 x + a_0$ is quadratic in the free term $a_0$ and the discriminant $\Delta_2$ of the discriminant $\Delta_3$ is proportional to the third power of quadratic element $A_2: \,\, \Delta_2 = 16 A_2^3$. Hence, the sign of the discriminant of the quadratic, which is the discriminant in $a_0$ of the cubic, is determined entirely by the sign of the quadratic element $A_2$. As discussed, if $A_2 \ge 0$, it is the adjacent coefficient, $a_0$, which controls the number of real roots of the cubic.

The discriminant of the derivative of the cubic, that is, the discriminant of $3 (a_3 x^2 + 2 a_2 x + a_1)$ is equal to $-36  A_2$.

For the quartic polynomial $a_4 x^4 + 4 a_3 x^3 + 6 a_2 x^2 + 4 a_1 x + a_0$, the discriminant is $\Delta_4 = 256 [a_4^3 a_0^3 + (- 12 a_1 a_3 a_4^2 - 18 a_2^2  a_4^2 + 54 a_2 a_3^2 a_4 - 27 a_3^4) a_0^2 + (54 a_1^2 a_2 a_4^2 - 6 a_1^2 a_3^2 a_4 - 180 a_1 a_2^2 a_3 a_4 + 108 a_1 a_2 a_3^3 + 81 a_2^4 a_4 - 54 a_2^3 a_3^2) a_0 - 27 a_1^4 a_4^2 + 108 a_1^3 a_2 a_3 a_4 - 64 a_1^3 a_3^3 - 54 a_1^2 a_2^3 a_4 + 36 a_1^2 a_2^2 a_3^2]$ --- a cubic in the free term $a_0$.

The discriminant of this cubic in $a_0$ is $19683 \, (a_1 a_4^2 - 3 a_2 a_3 a_4 + 2 a_3^3)^2 \, (- a_1^2 a_4^2 + 6 a_1 a_2 a_3 a_4 - 4 a_1 a_3^3 - 4 a_2^3 a_4 + 3 a_2^2 a_3^2)^3$. The sign of this discriminant is determined by the sign of the expression that is cubed. This expression is quadratic in $a_1$ and its discriminant is given by $16 A_3^3$.

The discriminant of the derivative of the quartic is $6912 [- a_1^2  a_4^2 + (6 a_2 a_3 a_4 - 4 a_3^3) a_1 - 4 a_2^3 a_4 + 3 a_3^2 a_2^2]$. This is a quadratic in the free term $a_1$ of the derivative of the quartic and the discriminant of this quadratic is $764411904 A_3^3$.

The discriminant of the second derivative of the quartic is $-576 A_3$.

Such type of relationships exist among the discriminants of polynomials of higher degrees, their derivatives, and their quadratic elements.

It is enticing to consider the {\it quartic sectors} (or higher) of a polynomial, instead of its {\it cubic sectors}.

Return to the quartic and recall that the discriminant in $a_0$ of the discriminant $\Delta_4$, which is a cubic in $a_0$, is given by $19683 \, (a_1 a_4^2 - 3 a_2 a_3 a_4 + 2 a_3^3)^2 \, (- a_1^2 a_4^2 + 6 a_1 a_2 a_3 a_4 - 4 a_1 a_3^3 - 4 a_2^3 a_4 + 3 a_2^2 a_3^2)^3$. Viewing the cubed term as a quadratic in $a_1$, with discriminant $16 A_3^3$, one immediately sees that if $A_3$ is negative, then the discriminant of $\Delta_4$, which is a cubic in $a_0$, is negative for all $a_1$ (except the value of $a_1$ for which the squared term is zero). Hence, the discriminant $\Delta_4$ of the quartic, viewed as a cubic in the free term $a_0$, will have one real root only and will change its sign only once with the variation of the free term $a_0$. The quartic will have only one stationary point, as the derivative of the quartic is quadratic in its free term $a_1$ and the discriminant of this quadratic is proportional to $A_3^3$ and hence negative. Therefore, the quartic will either have no real roots (positive discriminant) or it will have 2 real roots and a pair of complex conjugate roots (negative discriminant).

If, on the other hand, $A_3$ is positive and $a_1$ is such that the squared term is not zero, then the above discriminant of the cubic in $a_0$ is positive. There will be three real roots of the cubic in $a_0$ and the discriminant $\Delta_4$ of the quartic will change sign three times. Thus, with the variation of the free term $a_0$, the quartic with $a_4 > 0$ will ``shift through" the following bands with the increase of $a_0$: negative discriminant with 2 real roots and a pair of complex roots; then a positive discriminant with four real roots; then a negative discriminant with two real roots and a pair of complex roots again; and finally --- a positive discriminant with no real roots.

The values of the free term $a_0$ at which the discriminant of the quartic changes sign (once or three times) are roots of cubic polynomials and, for the general cubic polynomial, these cannot be given in a form that would allow analysis.

Thus, unfortunately, for a polynomial of degree 4 or more, it is not possible to determine the number of real roots just by knowing which derivative(s) do not posses real roots only or by knowing how many real roots each of its derivatives has. Only for polynomials of degree 3 or less this can be done. Moreover, with the increase of the degree of the  polynomial, the Abel--Ruffini theorem prevents from explicitly knowing the points at which the discriminant of the polynomial changes sign (by variation of the free term of the polynomial).

\section{Conclusions}
The reason why the modified Newton's rule works and why the possible number of real roots gets reduced in the presence of a falsely positive quadratic element lies in the fact that a cubic sector with a falsely positive quadratic element simply cannot have three real roots, it can only have one. This is not visible if one only considers the quadratic sectors of the polynomial --- for example, the two quadratic sectors of a cubic may have positive discriminants, that is, the two quadratic elements of the cubic may be positive, but this does not necessarily mean that three real roots of the cubic will exist. With a falsely positive quadratic element, at least two (and up to $2m \le n$) units are ``lost" from the maximum number of possible real roots of the polynomial $p(x)$ from what could be determined with the original Newton's rule. However, it is impossible to calculate exactly by what even number the upper bound on the number of possible real roots will be reduced due to the presence of falsely positive quadratic elements --- it will be by at least a pair for each newly-formed negative group of the falsely positive quadratic elements. That is, one can determine the minimum number of such "lost" real roots, but not their exact number, as noted by Newton \cite{newton} in the case of a negative quadratic element on the basis of his analysis of the quadratic sectors of the polynomial.

\section*{Acknowledgements}

\noindent
It is a great pleasure to thank Jacques G\'elinas for his initial suggestion which triggered this work, for the introduction to the topic and for the constant supply of historical and contemporary references.



\end{document}